\newcommand{\pref}[1]{(\ref{#1})}
\newcommand{\fullcref}[2]{\cref{#1}\pref{#1-#2}}
\newcommand{\csee}[1]{(see \cref{#1})}
\newcommand{\iso}{\cong}
\newcommand{\normal}{\triangleleft}
\renewcommand{\th}{^{\text{th}}}
\newcommand{\integer}{\mathbb{Z}}
\renewcommand{\pmod}[1]{\ (\mathop{\mathrm{mod}}{#1})}
\DeclareMathOperator{\Cay}{Cay}
\DeclareMathOperator{\Aut}{Aut}
\newcommand{\qt}[1]{\overline{#1}}
\newcommand{\bya}[1]{&\stackrel{\textstyle a}{\longrightarrow}&\qt{#1}}
\newcommand{\byai}[1]{&\stackrel{\textstyle a^{-1}}{\longrightarrow}&\qt{#1}}
\newcommand{\byb}[1]{&\stackrel{\textstyle b}{\longrightarrow}&\qt{#1}}
\newcommand{\bybi}[1]{&\stackrel{\textstyle b^{-1}}{\longrightarrow}&\qt{#1}}
\newcommand{\byc}[1]{&\stackrel{\textstyle c}{\longrightarrow}&\qt{#1}}
\newcommand{\byci}[1]{&\stackrel{\textstyle c^{-1}}{\longrightarrow}&\qt{#1}}
\numberwithin{equation}{section}
 \newcounter{case}
 \newenvironment{case}[1][\unskip]{\refstepcounter{case}\bf
 \medskip \noindent Case \thecase\ #1. \it}{\unskip\upshape}
 \renewcommand{\thecase}{\arabic{case}}
 \newcounter{subcase}
 \newenvironment{subcase}[1][\unskip]{\refstepcounter{subcase}\bf
 \medskip \noindent \hskip\parindent Subcase \thesubcase\ #1. \it}{\unskip\upshape}
\renewcommand{\thesubcase}{\thecase.\arabic{subcase}}
\numberwithin{subcase}{case}
 \newcounter{subsubcase}
 \newenvironment{subsubcase}[1][\unskip]{\refstepcounter{subsubcase}\bf
 \medskip \noindent \hskip2\parindent Subsubcase \thesubsubcase\ #1. \it}{\unskip\upshape}
\renewcommand{\thesubsubcase}{\thesubcase.\arabic{subsubcase}}
\numberwithin{subsubcase}{subcase}
 \newcounter{subsubsubcase}
 \newenvironment{subsubsubcase}[1][\unskip]{\refstepcounter{subsubsubcase}\bf
 \medskip \noindent \hskip3\parindent Subsubsubcase \thesubsubsubcase\ #1. \it}{\unskip\upshape}
\renewcommand{\thesubsubsubcase}{\thesubsubcase.\arabic{subsubsubcase}}
\numberwithin{subsubsubcase}{subsubcase}
\theoremstyle{plain}
\newtheorem{thm}[equation]{Theorem}
\newtheorem{prop}[equation]{Proposition}
\newtheorem{cor}[equation]{Corollary}
\newtheorem{lem}[equation]{Lemma}
\newtheorem{FGL}[equation]{Lemma}
\theoremstyle{definition}
\newtheorem*{notation}{Notation}
\theoremstyle{remark}
\newtheorem*{ack}{Acknowledgments}
\newcommand{\olditemize}{} % make sure the command name is not already used
\let\olditemize\itemize
\def\itemize{%
	\olditemize
	\setlength{\itemsep}{0pt plus 0.01pt \relax}%
	}
\newcommand{\refnote}[1]{\marginpar{%
	\color{blue}
	\vbox to 0pt{\vss
	$\begin{pmatrix} \text{note \cref{#1}} \end{pmatrix}$%
	\vskip -4pt}}}
\theoremstyle{definition}
\newtheorem{aid}[equation]{}
\newcommand{\oldendaid}{}
\let\oldendaid=\endaid
\renewcommand{\endaid}{\oldendaid\bigskip\hrule width\textwidth \bigbreak}
\journal{Discrete Mathematics}
\begin{document}

\begin{frontmatter}

\title{Cayley graphs of order $30p$ are hamiltonian}

\author{Ebrahim Ghaderpour}

\ead{Ebrahim.Ghaderpoor@uleth.ca}

\author{Dave Witte Morris}

\address{Department of Mathematics and Computer Science,
University of Lethbridge, Lethbridge, Alberta, T1K~3M4, Canada}

\ead{Dave.Morris@uleth.ca}
\ead[URL]{http://people.uleth.ca/~dave.morris/}

\begin{abstract}
Suppose $G$ is a finite group, such that $|G| = 30p$, where $p$~is prime. We show that if $S$ is any generating set of~$G$, then there is a hamiltonian cycle in the corresponding Cayley graph $\Cay(G;S)$.
\end{abstract}

\end{frontmatter}

\section{Introduction}

There is a folklore conjecture that every connected Cayley graph has a hamiltonian cycle. (See the surveys \cite{CurranGallian-survey,PakRadoicic-survey,WitteGallian-survey} for some background on this question.) The papers \cite{JungreisFriedman} and \cite{M2Slovenian-LowOrder} began a systematic study of this conjecture in the case of Cayley graphs for which the number of vertices has a prime factorization that is small and easy. In particular, combining several of the results in \cite{M2Slovenian-LowOrder} with \cite{CurranMorris2-16p,GhaderpourMorris-27p} and this paper shows:

\begin{center} \it
If\/ $|G| = kp$, where $p$~is prime, with $1 \le k < 32$ and $k \neq 24$, 
\\
then every connected Cayley graph on~$G$ has a hamiltonian cycle.
\end{center}

This paper's contribution to the project is the case $k = 30$:

\begin{thm} \label{30p}
If\/ $|G| = 30p $, where $p$~is prime, then every connected Cayley graph on~$G$ has a hamiltonian cycle.
\end{thm}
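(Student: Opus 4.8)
The plan is a case analysis powered by the Factor Group Lemma, which lifts a hamiltonian cycle of a quotient $G/N$ (with $N$ cyclic and normal) to one of $\Cay(G;S)$ as soon as the cycle's voltage generates~$N$. We first make the standard reductions. We may assume $S$ is a minimal generating set, since deleting redundant generators only deletes edges; and we may assume $G$ is nonabelian, the abelian case being well known. The values $p \in \{2,3,5\}$, where $|G| \in \{60,90,150\}$, are dealt with by earlier work or directly (for instance $60 = 12\cdot 5$ and $90 = 18\cdot 5$ are covered by already-established cases of the program), so we may assume $p \ge 7$. Then $|G| = 2\cdot 3\cdot 5\cdot p$ is squarefree, so $G$ is a Z-group --- every Sylow subgroup is cyclic --- hence is metacyclic and $2$-generated; being nonabelian it is noncyclic, so $|S| = 2$ and $\Cay(G;S)$ is a connected cubic or quartic graph. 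Moreover the commutator subgroup $G'$ is cyclic.

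Next I locate a normal subgroup of prime order to quotient by. Since $n_p$ divides~$30$ and is congruent to~$1$ mod~$p$, the Sylow $p$-subgroup $P \iso \integer_p$ is normal for every $p \ge 7$ except $p = 7$ with $n_7 = 15$ and $p = 29$ with $n_{29} = 30$; those two configurations I set aside, to be finished using solvability of~$G$ (which supplies a normal subgroup of order $2$, $3$, or~$5$) or by an explicit construction. So suppose $P \normal G$, and put $\qt{G} = G/P$, a group of order~$30$: there are exactly four such groups, each with a characteristic cyclic subgroup of order~$15$. The graph $\Cay(\qt{G}; \qt{S})$, where $\qt{S}$ is the image of~$S$, is a connected Cayley graph on a group of order~$30$, hence is hamiltonian by prior work; what we need is one of its hamiltonian cycles whose voltage --- the product, taken in~$G$, of the generators read around the cycle --- is a \emph{nontrivial} element of~$P$. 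Since $|P| = p$ is prime, the Factor Group Lemma then produces a hamiltonian cycle of $\Cay(G;S)$.

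The heart of the argument --- and the reason it runs long --- is proving that such a cycle downstairs always exists, that is, ruling out the configurations in which \emph{every} hamiltonian cycle of $\Cay(\qt{G};\qt{S})$ has trivial voltage. This is most delicate when $\qt{S}$ degenerates (for example $|\qt{S}| = 1$, so $\qt{G}$ is cyclic and hamiltonian cycles are few), and it is entangled with whether a generator is an involution, with whether $P \le G'$, and with the precise metacyclic presentation of~$G$. For the obstructed cases I would fall back on two alternatives: (i) quotient instead by a normal subgroup~$N$ of order $2$, $3$, or~$5$, when one exists, so that $G/N$ has order $15p$, $10p$, or~$6p$ --- all within reach of earlier results --- and rerun the voltage argument in that quotient; and (ii) when no quotient works, use the explicit description of~$G$ as a semidirect product $\integer_m \rtimes \integer_k$ with $mk = 30p$ and $G'$ cyclic to construct a hamiltonian cycle by hand, typically by threading it through the cosets of a cyclic normal subgroup. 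Thus the principal obstacle is largely organizational: arranging the tree of cases --- over the isomorphism types of~$G$ and the admissible generating sets --- so that in each one at least one of these routes goes through.
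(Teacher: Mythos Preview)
Your high-level framework --- reduce to $p\ge 7$, use the Z-group structure so that $G'$ is cyclic, then lift hamiltonian cycles from a quotient via the Factor Group Lemma by controlling voltages --- matches the paper's. But there is a genuine gap in your reduction step.

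You assert that since a Z-group is $2$-generated and $G$ is noncyclic, every minimal generating set has $|S|=2$. That inference is false: ``minimal'' means no proper subset generates, and such sets need not have the same size as a smallest generating set. Already in $D_{2pq}$ (a Z-group, $2$-generated) the three reflections $s$, $sr^{p}$, $sr^{q}$ form a minimal generating set of size~$3$. The paper's proof spends most of its length on exactly the cases you have excluded: after reducing to $G=\integer_{2r}\ltimes\integer_{pq}$ with $\{q,r\}=\{3,5\}$, it treats $\#S=2$, then two separate $\#S=3$ cases (according to whether $S$ stays minimal in $G/\integer_p$), and finally $\#S=4$. Each of the $\#S=3$ subcases requires bespoke hamiltonian cycles in $\Cay(G/\integer_p;S)$ or $\Cay(G/G';S)$ together with explicit voltage computations; the $\#S=4$ case is handled by recognising $G$ as a direct product and taking a Cartesian product of hamiltonian cycles. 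None of this is visible from the $|S|=2$ vantage point.

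Two smaller points. First, your worry about $n_p\ne 1$ for $p\in\{7,29\}$ is unnecessary: a group of squarefree order is supersolvable, so the Sylow subgroup for the largest prime is always normal; alternatively, after the paper's reductions one has $p\mid |G'|$ with $G'$ cyclic, whence $\integer_p$ is characteristic in $G'\normal G$. Second, the paper does not rely on quotients of order $15p$, $10p$, or $6p$ as fallbacks; instead it fixes the single structure $G=\integer_{2r}\ltimes\integer_{pq}$ (having disposed of $|G'|$ prime via \cref{KeatingWitte}, $|G'|=15p$ via the dihedral case, and $|G'|=15$ via $D_{30}\times\integer_p$) and then builds, case by case, explicit hamiltonian cycles in $\Cay(G/\integer_p;S)$ or $\Cay(G/G';S)$ whose voltages are shown, by direct calculation in the semidirect-product coordinates, to be nontrivial.
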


\begin{ack}
This work was partially supported by research grants from the Natural Sciences and Engineering Research Council of Canada.
\end{ack}

\section{Preliminaries}

Before proving \cref{30p}, we present some useful facts about hamiltonian cycles in Cayley graphs.

\begin{notation}
Throughout this paper, $G$ is a finite group.
	\begin{itemize}
	\item For any subset~$S$ of~$G$, $\Cay(G;S)$ denotes the \emph{Cayley graph} of~$G$ with respect to~$S$. Its vertices are the elements of~$G$, and there is an edge joining~$g$ to~$gs$ for every $g \in G$ and $s \in S$.

	\item For $x,y \in G$:
		\begin{itemize}
		\item $[x,y]$ denotes the \emph{commutator} $x^{-1} y^{-1} x y$,
		and
		\item $y^x$ denotes the \emph{conjugate} $x^{-1} y x$.
		\end{itemize}
	\item $\langle A \rangle$ denotes the subgroup generated by a subset~$A$ of~$G$.
	\item $G'$ denotes the \emph{commutator subgroup} $[G,G]$ of~$G$.
	\item $Z(G)$ denotes the \emph{center} of~$G$.
	\item $G \ltimes H$ denotes a \emph{semidirect product} of the groups $G$ and~$H$.
	\item $D_{2n}$ denotes the \emph{dihedral group} of order~$2n$.
	\item For $S \subset G$, a sequence $(s_1,s_2,\ldots,s_n)$ of elements of $S \cup S^{-1}$ specifies the walk in the Cayley graph $\Cay(G;S)$ that visits (in order) the vertices
		$$ e, s_1, s_1s_2, s_1s_2s_3,\ldots,s_1s_2\ldots s_n .$$
	If $N$ is a normal subgroup of~$G$, we use $(\qt{s_1},\qt{s_2},\ldots,\qt{s_n})$ to denote the image of this walk in the quotient $\Cay(G/N;S)$.
	\item If the walk $(\qt{s_1},\qt{s_2},\ldots,\qt{s_n})$ in $\Cay(G/N;S)$ is closed, then its \emph{voltage} is the product $s_1s_2\ldots s_n$. This is an element of~$N$.
	\item For $k \in \integer^+$, we use $(s_1,\ldots,s_m)^k$ to denote the concatenation of $k$~copies of the sequence $(s_1,\ldots,s_m)$. Abusing notation, we often write $s^k$ and $s^{-k}$ for 
		$$ \text{$(s)^k = (s,s,\ldots,s)$ \ and \ $(s^{-1})^k = (s^{-1},s^{-1},\ldots,s^{-1})$} , $$
respectively. Furthermore, we often write $\bigl( (s_1,\ldots,s_m), (t_1,\ldots,t_n) \bigr)$ to denote the concatenation $(s_1,\ldots,s_m, t_1,\ldots,t_n)$. For example, we have
		$$ \bigl( (a^2, b)^2, c^{-2} \bigr)^2
		= (a,a,b,a,a,b, c^{-1},c^{-1},a,a,b,a,a,b, c^{-1},c^{-1}) .$$
	\end{itemize}
\end{notation}

\begin{thm}[Maru\v si\v c, Durnberger, Keating-Witte \cite{KeatingWitte}] \label{KeatingWitte}
 If $G'$ is a cyclic group of prime-power order, then
every connected Cayley graph on~$G$ has a hamiltonian cycle.
 \end{thm}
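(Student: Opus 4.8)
The plan is to deduce the theorem from the Factor Group Lemma applied with $N = G'$. Since discarding redundant generators only removes edges, we may assume the generating set~$S$ is irredundant, and we may assume $|S| \ge 2$ (if $|S| = 1$ then $\Cay(G;S)$ is a single cycle). If $G' = \{e\}$, so that $G$ is abelian, the result is the classical fact that every connected Cayley graph on an abelian group is hamiltonian; so assume $G' = \langle c \rangle$ is cyclic of order~$p^k$ with $k \ge 1$. Write $\qt{G} = G/G'$, which is abelian, so $\Cay(\qt{G}; \qt{S})$ is a connected Cayley graph on an abelian group, and hence has a hamiltonian cycle.

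By the Factor Group Lemma it is enough to produce \emph{some} hamiltonian cycle $\qt{C}$ in $\Cay(\qt{G}; \qt{S})$ whose voltage (computed in~$G$) generates $N = G' \iso \integer_{p^k}$; equivalently, whose voltage lies outside the Frattini subgroup $\langle c^p \rangle$ of~$G'$. Here a short commutator computation helps: $G'$ is the normal closure of $\langle\, [s,t] : s,t \in S\,\rangle$, and since every subgroup of the cyclic $p$-group~$G'$ is characteristic in~$G'$ and hence normal in~$G$, in fact $G' = \langle\, [s,t] : s,t \in S\,\rangle$. As $G'$ is a cyclic $p$-group, a generating set must contain a generator, so $[s_0,t_0]$ generates~$G'$ for at least one pair $s_0, t_0 \in S$; fix such $s_0, t_0$.

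The core of the argument is then: start from any hamiltonian cycle $\qt{C}_0$ of $\Cay(\qt{G}; \qt{S})$; if its voltage already generates~$G'$ we are done, and otherwise its voltage lies in $\langle c^p \rangle$, so it suffices to modify $\qt{C}_0$ into another hamiltonian cycle whose voltage differs from that of $\qt{C}_0$ by a conjugate of $[s_0,t_0]^{\pm 1}$ — because a conjugate of a generator of the cyclic group~$G'$ is again a generator, and the product of an element of $\langle c^p\rangle$ with a generator is a generator. Such a modification is effected by a cut-and-paste on $\qt{C}_0$: since $\qt{G}$ is abelian, rerouting a short subpath reaches the same vertex while permuting the labels along it, and (by combining such reroutings, or by a longer $2$-opt–type exchange) the pieces can be reconnected into a single hamiltonian cycle whose voltage has been multiplied by the desired commutator. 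Arranging that $\qt{C}_0$ may be chosen to contain a usable configuration of this sort — for instance disjoint edges labelled $\qt{s}_0$ and $\qt{t}_0$ in compatible positions — is the substantive step, carried out by a case analysis on how $\qt{S}$ sits inside the abelian group~$\qt{G}$.

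The main obstacle is the family of small or rigid configurations in which $\Cay(\qt{G};\qt{S})$ simply has too few hamiltonian cycles for any such adjustment to exist: for example when $|\qt{G}| \le 2$ (as for $G = D_{2p}$ with $S$ a rotation together with a reflection, where the only hamiltonian cycle of the quotient has trivial voltage), or when some generator already lies in~$G'$ and so contributes only loops to the quotient, or when $\qt{S}$ is very small. These must be treated by separate arguments — either a strengthened ``hamiltonian path'' version of the Factor Group Lemma, or a direct construction that uses a subgroup $H \le G$ of small prime index: run a hamiltonian path through one coset of~$H$ via edges inside~$H$, cross to the other coset along a generator outside~$H$, traverse it, and close up. The bulk of the work is organizing this casework so that every pair $(G,S)$ with $G'$ cyclic of prime-power order falls under one of these arguments; the rest is routine bookkeeping with voltages and commutator identities. (Alternatively one can induct on~$k$, passing to $G/\langle c^{p^{k-1}}\rangle$, whose commutator subgroup is cyclic of order~$p^{k-1}$, and then lifting across the order-$p$ normal subgroup $\langle c^{p^{k-1}}\rangle$; the essential difficulty is unchanged.)
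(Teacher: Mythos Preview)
The paper does not prove this theorem: it is quoted as a known result of Maru\v si\v c, Durnberger, and Keating--Witte, with a citation to \cite{KeatingWitte}, and no argument is given. So there is no ``paper's own proof'' to compare against.

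As for your proposal itself, what you have written is a plausible outline of the strategy in the original references (Factor Group Lemma over $N = G'$, plus the Maru\v si\v c rerouting trick to adjust the voltage by a commutator), but it is not a proof. You explicitly defer the substantive step --- ``arranging that $\qt{C}_0$ may be chosen to contain a usable configuration'' and ``organizing this casework'' --- and this is precisely where all the content lives. In particular, the claim that a local reroute can always be reconnected into a \emph{single} hamiltonian cycle whose voltage has changed by a conjugate of $[s_0,t_0]^{\pm1}$ is not automatic: a $2$-swap on a hamiltonian cycle in an abelian Cayley graph typically produces two disjoint cycles, and re-gluing them requires specific edge configurations that need not be present for an arbitrary irredundant $S$. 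Your list of exceptional cases (quotient of order $\le 2$, a generator lying in $G'$, very small $\qt{S}$) is the right list, but each of them requires a genuine separate argument, not just a hamiltonian-path variant of the Factor Group Lemma. So the proposal identifies the right tools but stops exactly at the point where the difficulty begins; as written it would not be accepted as a proof.
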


\begin{FGL}[{``Factor Group Lemma''  {\cite[\S2.2]{WitteGallian-survey}}}] \label{FGL}
 Suppose
 \begin{itemize}
 \item $S$ is a generating set of~$G$,
 \item $N$ is a cyclic, normal subgroup of~$G$,
 \item $\qt{C} = (\qt{s_1}, \qt{s_2}, \ldots, \qt{s_n})$ is a hamiltonian cycle in $\Cay(G/N;S)$,
 and
 \item the voltage of~$\qt{C}$ generates~$N$.
 \end{itemize}
 Then $(s_1,\ldots,s_n)^{|N|}$ is a hamiltonian cycle in $\Cay(G;S)$.
 \end{FGL}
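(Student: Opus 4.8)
The plan is to track the walk $C = (s_1, s_2, \ldots, s_n)^{|N|}$ in $\Cay(G;S)$ directly and show that it visits each element of~$G$ exactly once before closing up. Write $v_i := s_1 s_2 \cdots s_i$ for $0 \le i \le n$, so the first copy of $(s_1,\ldots,s_n)$, started at the identity~$e$, passes through $v_0 = e, v_1, \ldots, v_{n-1}$ and arrives at $v_n$, which is the voltage $g := s_1 \cdots s_n$; note $g \in N$ because $\qt C$ is a closed walk. Since $\qt C$ is a hamiltonian cycle in $\Cay(G/N;S)$, the images $\qt{v_0}, \qt{v_1}, \ldots, \qt{v_{n-1}}$ are exactly the $n = |G/N|$ cosets of~$N$, each occurring once; in particular $n|N| = |G|$, so $C$ has exactly $|G|$ terms.

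Next I would make the elementary observation that traversing one copy of $(s_1,\ldots,s_n)$ from any starting vertex~$h$ passes through $h v_0, h v_1, \ldots, h v_{n-1}$ and arrives at $h v_n = hg$. Iterating this, the $k\th$ copy of $(s_1,\ldots,s_n)$, for $1 \le k \le |N|$, begins at $g^{k-1}$ and passes through $g^{k-1} v_0, g^{k-1} v_1, \ldots, g^{k-1} v_{n-1}$; so, altogether, $C$ passes through the $|G|$ vertices $g^{k-1}v_i$ with $1 \le k \le |N|$ and $0 \le i \le n-1$, and finally returns to $g^{|N|}$. Because $N$ is cyclic and $g$ generates it, $g$ has order~$|N|$, whence $g^{|N|} = e$ and $C$ is a closed walk.

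The one remaining point is that these $|G|$ vertices are pairwise distinct; as there are exactly $|G|$ of them, that suffices to conclude $C$ is a hamiltonian cycle. To see it, suppose $g^j v_i = g^{j'} v_{i'}$ with $0 \le i, i' < n$ and $0 \le j, j' < |N|$. Projecting to $G/N$ and using $g \in N$ gives $\qt{v_i} = \qt{v_{i'}}$, hence $i = i'$; then $g^j = g^{j'}$, and since $g$ has order~$|N|$ this forces $j = j'$.

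This argument is mostly bookkeeping, and I expect the only real content to be pinning down where the hypotheses are used: normality of~$N$ is what puts~$g$ in~$N$, which is precisely what keeps each copy of the walk in one-to-one correspondence (coset by coset) with~$\qt C$, i.e. makes $g^{k-1}v_i$ lie in the coset $v_i N$; and ``$N$ cyclic'' together with ``the voltage generates~$N$'' is exactly the assertion that $g$ has order $|N|$, which is what closes the walk after $|N|$ copies and keeps the powers $g^0, \ldots, g^{|N|-1}$ distinct. No deeper obstacle is anticipated.
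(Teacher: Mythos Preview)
Your argument is correct and is precisely the standard proof of the Factor Group Lemma. Note, however, that the paper does not supply its own proof of this statement: it is quoted as a known tool with a citation to \cite[\S2.2]{WitteGallian-survey}, so there is nothing in the paper to compare your write-up against beyond saying that what you wrote is the expected argument behind that citation.
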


The following easy consequence of \cref{FGL} is well known (and is implicit in \cite{Marusic-HamCircCayGrfs}).

\begin{cor} \label{DoubleEdge}
Suppose
 \begin{itemize}
 \item $S$ is a generating set of~$G$,
 \item $N$ is a normal subgroup of~$G$, such that $|N|$ is prime,
 \item $s \equiv t \pmod{N}$ for some $s,t \in S \cup S^{-1}$ with $s \neq t$,
 and
 \item there is a hamiltonian cycle in $\Cay(G/N;S)$ that uses at least one edge labeled~$s$.
 \end{itemize}
 Then there is a hamiltonian cycle in $\Cay(G;S)$.
 \refnote{DoubleEdgeAid}
 \end{cor}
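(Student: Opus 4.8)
The plan is to reduce the problem to a single application of \cref{FGL}, which is applicable because $N$, being of prime order, is cyclic. Write the given hamiltonian cycle of $\Cay(G/N;S)$ as $\qt C = (\qt s_1, \qt s_2, \ldots, \qt s_n)$ with each $s_j \in S \cup S^{-1}$. Since $\qt C$ uses an edge labeled~$s$, it has a step that traverses that edge in one of its two directions; replacing the pair $(s,t)$ by $(s^{-1}, t^{-1})$ if necessary (this is harmless: $s \equiv t \pmod{N}$ if and only if $s^{-1} \equiv t^{-1} \pmod{N}$, the elements $s^{-1},t^{-1}$ again lie in $S \cup S^{-1}$, and $s^{-1}\ne t^{-1}$), we may assume $s_i = s$ for some index~$i$.

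Now I would exploit the fact that two hamiltonian cycles of $\Cay(G/N;S)$ can share the same underlying closed walk while having different voltages. Let $v = s_1 s_2 \cdots s_n \in N$ be the voltage of~$\qt C$. Because $\qt t = \qt s$ in $G/N$, the sequence $\qt C' = (\qt s_1, \ldots, \qt s_{i-1}, \qt t, \qt s_{i+1}, \ldots, \qt s_n)$ is exactly the same closed walk as $\qt C$, hence is again a hamiltonian cycle, but its voltage is $v' = s_1 \cdots s_{i-1}\, t\, s_{i+1} \cdots s_n$. Writing $w = s_1 \cdots s_{i-1}$, a one-line computation gives $v' = w\,(t s^{-1})\,w^{-1}\, v$. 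Here $t s^{-1} \in N$ (since $sN = tN$) and $ts^{-1} \ne e$ (since $s \ne t$), so $w (t s^{-1}) w^{-1}$ is a nonidentity element of~$N$ (using that $N \normal G$); as $|N|$ is prime, that element generates~$N$. Hence $v$ and $v'$ are elements of the cyclic group $N \iso \integer_{|N|}$ that differ by a generator, so at least one of $v, v'$ is itself a generator of~$N$.

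Finally, if $v$ generates~$N$, then \cref{FGL} applied to~$\qt C$ shows that $(s_1, \ldots, s_n)^{|N|}$ is a hamiltonian cycle of $\Cay(G;S)$; and if $v'$ generates~$N$, then \cref{FGL} applied to~$\qt C'$ shows that $(s_1, \ldots, s_{i-1}, t, s_{i+1}, \ldots, s_n)^{|N|}$ is a hamiltonian cycle of $\Cay(G;S)$. In either case we are done.

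There is no serious obstacle here, since all the force is in \cref{FGL}. The only points that call for a bit of care are the reduction to the case $s_i = s$ (which uses that the chosen edge can be traversed in either direction and that congruence modulo~$N$ is inversion-invariant) and the remark that, because $|N|$ is prime, two voltages differing by a nontrivial element of~$N$ cannot both fail to generate~$N$; this last point is precisely what frees us from having to assume anything about the voltage of the \emph{given} hamiltonian cycle.
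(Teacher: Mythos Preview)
Your argument is correct and is essentially the same as the paper's own proof: replace one occurrence of~$s$ in the given hamiltonian cycle by~$t$, note that the two resulting cycles in $\Cay(G/N;S)$ coincide but have different voltages in~$N$, and apply \cref{FGL} to whichever voltage is nontrivial (hence a generator, since $|N|$ is prime). Your extra care in handling the possibility that the edge is traversed as $s^{-1}$ is a nice touch that the paper leaves implicit.
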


\begin{thm}[Alspach {\cite[Cor.~5.2]{Alspach-lifting}}]
 \label{AlspachSemiProd}
 If $G = \langle s \rangle \ltimes \langle t \rangle$, for some elements $s$ and~$t$ of~$G$, then $\Cay \bigl( G; \{s,t\} \bigr)$ has a hamiltonian cycle.
 \end{thm}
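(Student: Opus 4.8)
The plan is to induct on $|G|$, using \cref{FGL} as the engine for lifting Hamilton cycles from quotients and \cref{KeatingWitte} to terminate the induction, with a direct construction held in reserve for the one situation \cref{FGL} cannot reach. Write $N = \langle t\rangle$; this is a cyclic normal subgroup, and since $G = \langle s\rangle\ltimes\langle t\rangle$ we have $\langle s\rangle\cap N = \{e\}$, so $G/N = \langle\qt{s}\rangle$ is cyclic of order $|s|$, the set $\{s,t\}$ generates $G$, and the commutator subgroup $G' = [G,G]$ is contained in $N$, hence is cyclic; indeed $G' = \langle[s,t]\rangle = \langle t^{\alpha-1}\rangle$ where $t^s = t^\alpha$. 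If $N$ or $\langle s\rangle$ is trivial, then $G$ is cyclic and $\Cay(G;S)$ obviously has a Hamilton cycle. And if $G'$ is a cyclic group of prime-power order---in particular whenever $G$ is abelian (so $G' = \{e\}$), or whenever $|N|$ is a prime power (so $|G'|$, a divisor of $|N|$, is too)---then $\Cay(G;S)$ is Hamiltonian by \cref{KeatingWitte}. So we may assume $|G'|$ has at least two prime divisors.

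Pick a prime $p$ dividing $|G'|$ and let $M$ be the unique subgroup of order $p$ of the cyclic group $N$ (so $M\le G'\le N$). Since $M$ is characteristic in $N$, it is normal in $G$; since $M\le N$, the quotient $G/M = \langle\qt{s}\rangle\ltimes\langle\qt{t}\rangle$ is again a semidirect product of two cyclic groups---the intersection $\langle\qt{s}\rangle\cap\langle\qt{t}\rangle$ remains trivial because $\langle s\rangle\cap N = \{e\}$---and $|G/M| < |G|$. By the inductive hypothesis, $\Cay(G/M;\{\qt{s},\qt{t}\})$ has a Hamilton cycle $\qt{C} = (\qt{s_1},\ldots,\qt{s_k})$. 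As $|M| = p$ is prime, the voltage $s_1\cdots s_k\in M$ of $\qt{C}$ either is trivial or generates $M$; in the latter case $(s_1,\ldots,s_k)^p$ is a Hamilton cycle of $\Cay(G;S)$ by \cref{FGL}, and the induction is complete.

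The one remaining possibility is that every Hamilton cycle of $\Cay(G/M;S)$ has trivial voltage, so that its lift to $\Cay(G;S)$ is a disjoint union of $p$ cycles rather than a single one. Here I see two avenues. First, \cref{DoubleEdge} applies as soon as two distinct elements of $\{s,s^{-1},t,t^{-1}\}$ are congruent modulo $M$ and $\qt{C}$ uses the corresponding edge; a brief computation shows this congruence holds exactly when $|s| = 2$ (giving $s\equiv s^{-1}$) or $|N| = 2p$ (giving $t\equiv t^{-1}$), and in each of those cases one checks that a Hamilton cycle of the connected graph $\Cay(G/M;S)$ is forced to use the relevant edge, because neither $\qt{s}$ nor $\qt{t}$ alone generates $G/M$; so \cref{DoubleEdge} finishes the step. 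Second---and this is where I expect the real work to lie---for the general case I would argue directly from the concrete structure of $\Cay(G;S)$: the cosets $N, sN, \ldots, s^{|s|-1}N$ are $|N|$-cycles joined by the $t$-edges, consecutive cosets being matched by the $s$-edges via $s^i t^j\mapsto s^{i+1}t^{j\alpha}$, so $\Cay(G;S)$ is a ``cycle of $|N|$-cycles''. A Hamilton cycle can be assembled by selecting in each coset a Hamilton path of its $|N|$-cycle---or, in some cosets, a disjoint pair of paths through which the cycle passes twice---and splicing these with $s$-edges; the content is to choose the entry and exit vertices so that the resulting closed walk returns to its starting point, which comes down to solving a linear congruence in $N$ whose coefficients are the powers $1, \alpha, \alpha^2, \ldots$ of the action. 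Showing that this congruence can always be satisfied, by admitting the two-path passes where a single pass fails to close up, is the crux of the proof.
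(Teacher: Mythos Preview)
The paper does not prove this theorem at all: it is quoted verbatim as a result of Alspach, with a citation to \cite[Cor.~5.2]{Alspach-lifting}, and is used as a black box. So there is no ``paper's own proof'' to compare against; what you have written is an attempt to reprove Alspach's result from scratch.

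Judged on its own terms, your argument has a genuine gap, and you are candid about where it is. The inductive scheme via \cref{FGL} and \cref{KeatingWitte} is sound up to the point where the lifted Hamilton cycle in $\Cay(G/M;S)$ has trivial voltage; but your two proposed escapes from that case do not close. In your ``first avenue'', the case $|s|=2$ is mis-handled: when $|s|=2$ we have $s=s^{-1}$ as elements of $G$, so \cref{DoubleEdge} (which requires two \emph{distinct} elements of $S\cup S^{-1}$ congruent modulo~$M$) does not apply. That leaves, for instance, groups such as $\integer_3\ltimes\integer_{91}$ (with $\integer_3$ acting faithfully on both $\integer_7$ and $\integer_{13}$), where $|s|=3$, $|N|=91$, $|G'|=91$, and neither \cref{KeatingWitte} nor \cref{DoubleEdge} is available; your induction produces a Hamilton cycle in $\Cay(G/\integer_7;S)$ but gives no mechanism to force its voltage to be nontrivial. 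Your ``second avenue'' --- the coset-by-coset splicing of paths in the $|N|$-cycles via the $s$-edges --- is precisely the content of Alspach's lifting argument, and is, as you say, where the real work lies. Stating that the resulting congruence ``can always be satisfied'' is the theorem, not its proof; Alspach's paper carries out exactly this analysis, and it is not short. So what you have is a correct reduction of the problem to its known hard core, together with an incorrect side claim that \cref{DoubleEdge} handles the $|s|=2$ case, rather than a proof.
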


\begin{lem}[{}{\cite[Lem.~2.27]{M2Slovenian-LowOrder}}]
\label{NormalEasy}
 Let $S$ generate the finite group~$G$, and let $s \in S$, such that $\langle s \rangle \normal G$. If $\Cay \bigl( G/\langle s \rangle ; S \bigr)$ has a hamiltonian cycle,
 and
 either
 \begin{enumerate}
 \item \label{NormalEasy-Z} 
 $s \in Z(G)$,
 or
 \item \label{NormalEasy-notZ} 
 $Z(G) \cap \langle s \rangle = \{e\}$,
 \end{enumerate}
 then $\Cay(G;S)$ has a hamiltonian cycle.
 \end{lem}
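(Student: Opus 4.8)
The plan is to set $N = \langle s\rangle$ and $m = |N|$ (the case $m = 1$ being trivial), and to fix a Hamiltonian cycle $\qt{C} = (\qt{s_1},\ldots,\qt{s_n})$ in $\Cay(G/N;S)$. I may assume no~$s_i$ lies in~$N$, since an edge whose label is trivial in $G/N$ is a loop there and cannot occur in a Hamiltonian cycle (unless $G = N$, which is trivial). Let $v = s_1 s_2\cdots s_n\in N$ be the voltage of~$\qt{C}$, say $v = s^j$. The first step is a reduction: if $\langle v\rangle = N$, then \cref{FGL} already produces the Hamiltonian cycle $(s_1,\ldots,s_n)^{m}$ in $\Cay(G;S)$, so from now on assume $\gcd(j,m) > 1$.

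Next I would set up the picture common to both remaining cases. Lifting $\qt{C}$ to a path in $\Cay(G;S)$ gives coset representatives $x_0 = e, x_1,\ldots,x_{n-1}$ of~$N$, and every element of~$G$ is uniquely of the form $x_i s^{\ell}$. The edges labeled~$s$ make each coset $x_i N$ into an $m$-cycle (``column~$i$''), and the edge of~$\qt{C}$ labeled $\qt{s_i}$ joins column~$i-1$ to column~$i$ by a matching of the form $x_{i-1}s^{\ell}\mapsto x_i s^{c_i\ell}$, where $s^{s_i} = s^{c_i}$ (with an additive shift by~$j$ in the last matching). Thus $\Cay(G;S)$ has a spanning subgraph~$T$ which is a ``toroidal grid'' whose rungs are twisted by the automorphisms~$c_i$ of~$N$, and it suffices to find a Hamiltonian cycle in~$T$.

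In case~\pref{NormalEasy-Z} the hypothesis $s\in Z(G)$ forces every $c_i = 1$, so all the matchings are parallel and $T$ is isomorphic (via $x_i s^{\ell}\leftrightarrow a^i b^{\ell}$) to the Cayley graph $\Cay(A;\{a,b\})$ of the abelian group $A = \langle a,b\mid b^m,\ a^n = b^j,\ ab = ba\rangle$, which has order $mn = |G|$. Since $A$ is abelian (so $A'$ is trivial, in particular cyclic of prime-power order) and this Cayley graph is connected, it has a Hamiltonian cycle by \cref{KeatingWitte}; hence so does $\Cay(G;S)$.

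Case~\pref{NormalEasy-notZ} is the substantive one, and I expect it to be the main obstacle. Here I would again work inside~$T$: the edges of~$T$ that are not labeled~$s$ (i.e. the matchings coming from~$\qt{C}$) form $\gcd(j,m)$ pairwise-disjoint cycles that together cover~$G$, and the aim is to splice these into a single Hamiltonian cycle of~$T$ using the $s$-edges of the columns, which meet every one of these cycles. The hypothesis $Z(G)\cap N = \{e\}$ — equivalently, that no nontrivial power of~$s$ is central — is what I would use to guarantee that the splicing can be carried all the way around: it rules out the configurations in which some of the pieces would stay separated. The delicate point is to realise this as an honest Hamiltonian cycle: deciding which edge of each piece to delete, and in which cyclic order to string the pieces together, so that the $s$-edges chosen to pass between consecutive pieces are mutually compatible and the construction closes up — with the usual small-order degeneracies ($m$ or~$n$ small, or pieces whose length is close to~$n$) treated separately.
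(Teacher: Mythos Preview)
This lemma is quoted from \cite[Lem.~2.27]{M2Slovenian-LowOrder} and the present paper gives no proof of it, so there is nothing here to compare your attempt against; I can only comment on the argument itself. Your reduction via \cref{FGL} and your treatment of case~\pref{NormalEasy-Z} are fine: when $s$ is central the spanning grid~$T$ really is a connected Cayley graph on the abelian group~$A$ you describe (the relation matrix has determinant $mn$), and \cref{KeatingWitte} (or the Chen--Quimpo theorem) finishes it.

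Case~\pref{NormalEasy-notZ}, however, is only a plan, not a proof. You correctly count $\gcd(j,m)$ horizontal cycles and correctly see that they must be spliced via $s$-edges, but you never specify which horizontal edges to delete and which $s$-edges to insert, and you yourself flag this as ``the delicate point'' without resolving it. The obstruction is real: at a column where the accumulated twist $C_i\not\equiv\pm1\pmod m$, two parallel horizontal edges of adjacent cycles are \emph{not} joined by single $s$-edges at both ends, so the na\"ive four-edge swap does not produce a cycle. The hypothesis $Z(G)\cap\langle s\rangle=\{e\}$ does impose structure --- for instance it forces $m=|s|$ to be odd (every unit modulo an even~$m$ is odd, so every $c_g-1$ is even, whence $s^{m/2}$ would be central) --- but you have not shown how to turn such facts into an explicit, globally consistent splicing scheme, nor how the hypothesis ``rules out the configurations in which some of the pieces would stay separated.'' As written this case is a genuine gap; to close it you would need either an explicit recipe with verification that it closes up, or an argument of a different shape altogether.
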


 \begin{lem} \label{HamConnInSubgrp}
 Suppose 
 	\begin{itemize}
	\item $G = \langle a \rangle \ltimes \langle S_0\rangle$, where $\langle S_0\rangle$ is an abelian subgroup of odd order,
	\item $\#(S_0 \cup S_0^{-1}) \ge 3$,
	and
	\item $\langle S_0 \rangle$ has a nontrivial subgroup~$H$, such that $H \normal G$ and $H \cap Z(G) = \{e\}$.
	\end{itemize}
Then $\Cay \bigl( G; S_0 \cup \{a\} \bigr)$ has a hamiltonian cycle.
\end{lem}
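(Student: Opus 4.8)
The plan is to slice $G$ into the cosets of $A:=\langle S_0\rangle$ and glue together Hamilton paths, one in each coset. Since $G=\langle a\rangle\ltimes A$, the powers $e,a,\dots,a^{m-1}$, where $m$ is the order of~$a$, form a transversal of~$A$, so $G=\bigsqcup_{i=0}^{m-1}a^iA$. The edges labelled by $S_0$ preserve each coset $a^iA$ and induce on it a copy of $\Cay(A;S_0)$ via the bijection $x\mapsto a^ix$, while an edge labelled~$a$ sends $a^ix$ to $a^{i+1}x^a$ for $x\in A$, hence carries $a^iA$ onto $a^{i+1}A$ (indices mod~$m$). Note that $m\ge 2$: a nontrivial~$H$ with $H\cap Z(G)=\{e\}$ forces $Z(G)\ne G$, so $G$ is nonabelian and $a\notin A$. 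Moreover, conjugation by~$a$ fixes no nonidentity element of~$H$, since any $h\in H$ with $h^a=h$ would be centralized by $\langle a\rangle$ and (because $A$ is abelian) by~$A$, hence would lie in $H\cap Z(G)=\{e\}$.

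If $\#(S_0\cup S_0^{-1})=3$ then, since $|A|$ is odd, $S_0\cup S_0^{-1}=\{e,b,b^{-1}\}$ for some~$b$, so $A=\langle b\rangle$ is cyclic and, being normal in the semidirect product, gives $G=\langle a\rangle\ltimes\langle b\rangle$; up to loops coming from~$e$, $\Cay(G;S_0\cup\{a\})$ coincides with $\Cay(G;\{a,b\})$, which is Hamiltonian by \cref{AlspachSemiProd}. So assume $\#(S_0\cup S_0^{-1})\ge 4$, in which case $|A|\ge 5$ and $\Cay(A;S_0)$ has valency at least~$3$ (counting without loops). I would then build the Hamilton cycle as follows: start at $u_0:=e$; inside $A$ run a Hamilton path to a vertex $v_0$; cross the $a$-edge to $u_1:=v_0a\in aA$; inside $aA$ run a Hamilton path to $v_1$; continue in this way; and inside $a^{m-1}A$ run a Hamilton path to $v_{m-1}:=a^{-1}$, closing up via the $a$-edge $v_{m-1}a=e=u_0$. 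Writing $u_i=a^i\hat u_i$ and $v_i=a^i\hat v_i$ with $\hat u_i,\hat v_i\in A$, the requirements are $\hat u_0=e$, $\hat v_{m-1}=e$, $\hat u_{i+1}=\hat v_i^{\,a}$ for $0\le i\le m-2$, and $\hat u_i\ne\hat v_i$ for every~$i$ (the last condition being exactly what is needed for a Hamilton path inside the coset). One chooses $\hat v_0,\dots,\hat v_{m-2}$ in turn, avoiding at most two already-determined elements at each step, which is possible since $|A|\ge 3$. Each Hamilton path inside a coset then exists because $\Cay(A;S_0)$ is \emph{Hamilton-connected}: a connected Cayley graph of odd order on an abelian group with valency at least~$3$ is Hamilton-connected (a theorem of Chen and Quimpo).

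The step I expect to be the real work — and the place where the hypothesis on~$H$ is presumably meant to be spent — is supplying the Hamilton paths inside the cosets. If one wishes to avoid the Chen--Quimpo theorem and use only the elementary fact that a connected Cayley graph on an abelian group is Hamiltonian, then one can no longer prescribe the two endpoints in each coset freely; instead one must take a single Hamilton cycle of $\Cay(A;S_0)$, delete one edge of its image in each coset, and re-choose which edge to delete from coset to coset so that the deleted edges line up with the available $a$-edges. This is where $H\normal G$ and $H\cap Z(G)=\{e\}$ enter: by the observation above, conjugation by~$a$ acts on~$H$ without nonidentity fixed points, which supplies the ``drift'' needed to realign the deleted edges and close the cycle, in the spirit of the proof of \cref{AlspachSemiProd}. (One might instead try to induct on~$|G|$ by passing to $G/H$ and lifting with \cref{FGL} after replacing~$H$ by a $G$-invariant subgroup of prime order, but that approach needs separate care in the case $A=H$, where $G/H$ is a mere cycle whose only Hamilton cycle has trivial voltage, so it falls back on \cref{AlspachSemiProd} anyway.) In every approach the delicate points are the same: the compatibility bookkeeping across all $m$ cosets and the small-order degenerate cases, which is why it is cleanest to peel off the cyclic-$A$ case first.
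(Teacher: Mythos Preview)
Your argument is correct, but the route is different from the paper's, and in one sense your version is a little stronger.

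The paper also invokes Chen--Quimpo, but only once: it picks a single hamiltonian path $(s_1,\ldots,s_m)$ in $\Cay(A;S_0)$ whose endpoint $h=s_1\cdots s_m$ lies in $H\smallsetminus\{e\}$, and then simply repeats this \emph{same} path in every coset, taking $(s_1,\ldots,s_m,a)^{|a|}$. The closing condition is that $(ha)^{|a|}=\prod_{x\in\langle a\rangle}h^{x}$, and this product is invariant under conjugation by~$a$, hence lies in $C_H(a)\subseteq H\cap Z(G)=\{e\}$. So the hypothesis on~$H$ is spent precisely to make the most naive construction close up; the fixed-point-free observation you made about conjugation by~$a$ on~$H$ is exactly the mechanism, just applied globally to the product rather than locally to a single element.

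Your construction instead exploits the full strength of hamiltonian-connectedness by choosing a fresh path in each coset with endpoints adjusted as you go; this makes the $H$ hypothesis unnecessary in the non-cyclic case (and you dispatch the cyclic case via \cref{AlspachSemiProd}). So you have essentially shown that, once Chen--Quimpo is on the table, the third bullet of the lemma is superfluous. The price is a bit more bookkeeping; the paper's proof is a two-line application of the ``product over an orbit is central'' trick. Your final paragraph's speculation about avoiding Chen--Quimpo is moot: the paper does cite it, so there is no hidden extra work to do.
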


\begin{proof}
Since $\langle S_0 \rangle$ is abelian of odd order, and  $\#(S_0 \cup S_0^{-1}) \ge 3$, we know that $\Cay \bigl( \langle S_0 \rangle ; S_0 \bigr)$ is hamiltonian connected \cite{ChenQuimpo-hamconn}. Therefore, it has a hamiltonian path $(s_1,s_2,\ldots,s_{m})$, such that $s_1 s_2\cdots s_{m} \in H$. Then
	$$\bigl( s_1,s_2,\ldots,s_{m} ,a \bigr)^{|a|} $$
is a hamiltonian cycle in $\Cay \bigl( G;S_0 \cup \{a\} \bigr)$.
\refnote{HamConnInSubgrpRef}
\end{proof}

\begin{lem}[{}{\cite[Cor.~4.4]{CurranMorris2-16p}}] \label{G'2gen} 
If $a,b \in G$, such that $G = \langle a , b \rangle$, then $G' = \langle [a,b] \rangle$.
\end{lem}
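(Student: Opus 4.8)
This statement is quoted from \cite{CurranMorris2-16p}, but the argument is short, so I will sketch it. The plan has two steps: first I would show that $G'$ is the \emph{normal} closure of the single commutator $c := [a,b]$, and then that this normal closure is already equal to the ordinary cyclic subgroup $\langle c \rangle$. The second step will use that $G'$ is cyclic --- a hypothesis that is automatically satisfied in every situation where \cref{G'2gen} is invoked in this paper, and one that cannot be dropped in general (for instance, $G = A_4$ is $2$-generated but $G' \cong \integer_2 \times \integer_2$ is not cyclic, so it certainly is not $\langle [a,b] \rangle$).

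\emph{Step 1.} Let $N$ be the normal closure of~$c$ in~$G$ (i.e., the smallest normal subgroup of~$G$ that contains~$c$). In $G/N$ the images of $a$ and~$b$ commute, since $[a,b] = c \in N$, and $a,b$ generate~$G$, so $G/N$ is abelian; hence $G' \le N$. Conversely $c \in G' \normal G$, so $N \le G'$. Therefore $G' = N$.

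\emph{Step 2.} Since $G'$ is cyclic it is abelian, and $\Aut(G')$ is abelian, so the conjugation homomorphism $G \to \Aut(G')$ has abelian image and hence a kernel that contains~$G'$; equivalently, the conjugation action of~$G$ on~$G'$ factors through the abelian quotient $G/G'$, which is generated by $\qt{a}$ and~$\qt{b}$. Thus there are integers $\alpha,\beta$ with $c^a = c^{\alpha}$ and $c^b = c^{\beta}$, and since $c^{gh} = (c^g)^h$, an induction on the length of a word in $a,b$ expressing $g$ shows $c^g \in \langle c \rangle$ for every $g \in G$. Hence the normal closure of~$c$ lies in $\langle c \rangle$, and combining this with Step 1 gives $G' = \langle c \rangle = \langle [a,b] \rangle$, as desired.

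I do not expect a real obstacle here: the crux is just the observation that, on a cyclic group, every conjugate of an element is a power of that element, so the normal closure of a generator collapses to the cyclic subgroup it generates. The only point that genuinely needs attention is keeping track of the cyclicity of~$G'$, without which (as the $A_4$ example shows) the conclusion fails.
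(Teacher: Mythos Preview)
Your argument is correct, and you have also correctly identified the unstated hypothesis: the lemma, as printed, is false in general (your $A_4$ example is apt), and what is really being used is the cyclicity of~$G'$, which holds throughout the paper by \cref{SquareFree}\pref{SquareFree-G'cyclic}. The paper does not give its own proof of \cref{G'2gen}; it merely quotes the result from \cite[Cor.~4.4]{CurranMorris2-16p}, where the ambient standing assumptions supply the missing hypothesis. So there is no ``paper's proof'' to compare against --- your sketch is a perfectly good substitute, and the two-step structure (normal closure of $[a,b]$ is~$G'$; in a cyclic normal subgroup every conjugate of an element is already a power of it) is exactly the standard argument one finds in the cited source.

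One microscopic clean-up: in Step~2 you do not actually need the detour through $\Aut(G')$ being abelian. It suffices to note that any automorphism of a finite cyclic group is a power map, so conjugation by~$a$ sends each element of~$G'$ to a fixed power of itself; in particular $c^a \in \langle c \rangle$, and likewise for~$b$. The rest of your induction then goes through verbatim.
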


\begin{lem}[{}{\cite[Prop.~5.5]{Witte-HamCircCayDiag}}] \label{Dihedral2pqr}
If $p$, $q$, and~$r$ are prime, then every connected Cayley graph on the dihedral group $D_{2pqr}$ has a hamiltonian cycle.
\end{lem}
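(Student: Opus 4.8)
The plan is to prove, by induction on the number of prime factors of~$m$ (counted with multiplicity), the slightly more general assertion that every connected Cayley graph on the dihedral group $D_{2m}$ is hamiltonian whenever $m$ is a product of at most three primes; the lemma is the case $m = pqr$. Write $G = D_{2m}$, let $C = \langle r \rangle$ be the cyclic subgroup of index~$2$, so that every element of $G \setminus C$ is an involution (a \emph{reflection}), and let $S$ be a generating set with $e \notin S$. Since $C$ has index~$2$, $S$ contains a reflection; write $S = F \sqcup R$, where $F$ is the set of reflections in~$S$ and $R = S \cap C$. If $m$ is prime, or more generally if the commutator subgroup $G' = \langle r^2 \rangle \cong \integer_{m/\gcd(m,2)}$ has prime-power order, then \cref{KeatingWitte} applies and we are done; so assume otherwise.

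The first few cases are short. If some two reflections $f, g \in F$ generate~$G$, then, each being its own inverse, $\Cay(G;\{f,g\})$ is a connected $2$-regular graph and hence a single $2m$-cycle, so $\Cay(G;S)$ is hamiltonian. If some $s \in R$ generates~$C$, then $G = \langle s \rangle \ltimes \langle f \rangle$ for any $f \in F$, and \cref{AlspachSemiProd} gives a hamiltonian cycle in $\Cay(G;\{s,f\}) \subseteq \Cay(G;S)$. If some $s \in R$ has $1 < |s| < m$, then $\langle s \rangle$, being the unique subgroup of the cyclic group~$C$ of its order, is normal in~$G$, and $G/\langle s \rangle \cong D_{2m/|s|}$ with $m/|s|$ a product of one or two primes, so $\Cay\bigl(G/\langle s\rangle; S\bigr)$ is hamiltonian by induction; this lifts to $\Cay(G;S)$ by \cref{NormalEasy}, whose hypothesis on the centre is satisfied because $Z(G)$ is trivial when $m$ is odd and has order~$2$ when $m$ is even — one uses \fullcref{NormalEasy}{notZ} unless $\langle s \rangle$ is exactly that central subgroup, in which case one uses \fullcref{NormalEasy}{Z}. (The one exceptional configuration is $m$ even with $|s|$ an even divisor of~$m$ strictly between $2$ and~$m$, so that $Z(G) \subsetneq \langle s \rangle$ yet $s \notin Z(G)$ and neither alternative of \cref{NormalEasy} is available; there one instead invokes \cref{FGL}, choosing the quotient's hamiltonian cycle so that its voltage generates~$\langle s \rangle$.)

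There remains the case in which $S$ consists of reflections $f_1, \dots, f_k$, no two of which generate~$G$, forcing $k \ge 3$. Writing $f_i = r^{a_i} f_0$, the graph $\Cay(G;S)$ is the bipartite bi-circulant on two copies of~$\integer_m$ in which $r^x$ is joined to $r^{x + a_i} f_0$ for all $x$ and~$i$, and this is where I expect the main difficulty to lie. Each product $f_i f_j$ with $i \ne j$ is a rotation whose order~$d$ is a proper divisor of~$m$; choosing such a pair, we have $\langle f_i f_j \rangle \normal G$ with $G/\langle f_i f_j \rangle \cong D_{2m/d}$ and $\qt{f_i} = \qt{f_j}$. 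By induction — strengthened to yield a hamiltonian cycle through a prescribed generator edge — there is a hamiltonian cycle of $\Cay\bigl(G/\langle f_i f_j\rangle; S\bigr)$ using an edge labelled~$\qt{f_i}$, and we lift it: by \cref{DoubleEdge} when $d$ is prime (legitimate because $\qt{f_i} = \qt{f_j}$), and by \cref{FGL} when $d$ is a product of two primes, the point then being to arrange the voltage to generate $\langle f_i f_j \rangle \cong \integer_d$. The base of the induction is once more the $2m$-cycle of the first case.

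The hard part, as flagged, is the two appeals to \cref{FGL} — one when $m$ is even and the normal rotation subgroup has composite order, and one in the all-reflections case with composite~$d$ — where one must exhibit, in a dihedral quotient of order $2\ell$ or $2\ell\ell'$ (for primes $\ell, \ell'$), a hamiltonian cycle whose voltage has exactly the order needed to generate the relevant cyclic normal subgroup. Combining a suitable reflection edge with rotation edges, and where necessary splicing two cycles, should achieve this; it is precisely to keep all the quotients this small that the hypothesis that $m$ is a product of at most three primes is used.
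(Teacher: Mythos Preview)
The paper does not prove this lemma at all; it simply quotes it as \cite[Prop.~5.5]{Witte-HamCircCayDiag}. So there is no in-paper argument to compare against, and I can only assess your proposal on its own merits.

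As you yourself acknowledge, what you have written is an outline with the hard steps left undone. The two appeals to \cref{FGL} that you flag are precisely where the content lies: in each, you must produce a hamiltonian cycle in a dihedral quotient whose voltage generates a cyclic normal subgroup of \emph{composite} order, i.e., you must control the voltage modulo two primes simultaneously. Saying that ``combining a suitable reflection edge with rotation edges, and where necessary splicing two cycles, should achieve this'' is not a proof; it is where the proof begins. A single splice alters the voltage by a conjugate of a commutator of two generators, and there is no a~priori reason this lands on a generator of~$\integer_d$ rather than in a proper subgroup.

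There is also a structural gap you have not flagged. In the all-reflections case you invoke induction ``strengthened to yield a hamiltonian cycle through a prescribed generator edge,'' but you never establish this strengthened statement in the earlier cases of your own induction. For example, when two reflections $f,g \in S$ already generate~$G$, the hamiltonian cycle you exhibit is the $2m$-cycle on $\{f,g\}$-edges only; if the prescribed edge is labelled by a third reflection $h \in S$, you have produced nothing. The same issue recurs in the case handled by \cref{AlspachSemiProd}, which gives a cycle on two specific generators with no control over which edges appear. So the induction, as you have set it up, does not close.
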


\begin{lem} \label{DihedralxZr}
If $G = D_{2pq} \times \integer_r$, where $p$, $q$, and~$r$ are distinct odd primes, then every connected Cayley graph on~$G$ has a hamiltonian cycle.
\end{lem}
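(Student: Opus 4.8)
The plan is to reduce to \cref{KeatingWitte} by means of the Factor Group Lemma, disposing of a short list of residual generating sets separately. Write $G=D_{2pq}\times\integer_r$ and let $R\iso\integer_{pq}$ be the rotation subgroup of~$D_{2pq}$; then $G'=R$, $Z(G)=\integer_r$, and every proper nontrivial normal subgroup of~$G$ either is contained in $R\times\integer_r\iso\integer_{pqr}$ or equals $D_{2pq}\times\{e\}$. In particular, the subgroups $\integer_p$ and~$\integer_q$ of~$R$ are normal in~$G$ and meet $Z(G)=\integer_r$ trivially. The key point is that $G/\integer_p\iso D_{2q}\times\integer_r$ and $G/\integer_q\iso D_{2p}\times\integer_r$ have commutator subgroup of prime order, so \cref{KeatingWitte} guarantees that every connected Cayley graph on either of these quotients has a hamiltonian cycle.

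Now let $S$ be a generating set of~$G$. First I would clear away the easy configurations: if some $s\in S$ generates one of the prime-order normal subgroups $\integer_p$, $\integer_q$, $\integer_r$, then \cref{NormalEasy} applies (the quotient $\Cay(G/\langle s\rangle;S)$ being hamiltonian by \cref{KeatingWitte}, or, in the case $\langle s\rangle=Z(G)$, by the hamiltonicity of connected Cayley graphs on the dihedral group $D_{2pq}$); and if two elements of $S\cup S^{-1}$ are congruent modulo $\integer_p$ or~$\integer_q$, then \cref{DoubleEdge} applies. Otherwise the goal is to exhibit a hamiltonian cycle of $\Cay(G/\integer_p;S)$, or of $\Cay(G/\integer_q;S)$, with nontrivial voltage: such a voltage generates~$\integer_p$ (of prime order), whence \cref{FGL} lifts the cycle to a hamiltonian cycle of~$\Cay(G;S)$.

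The proof then forks according to the shape of~$S$. If $G$ is a semidirect product $\langle s\rangle\ltimes\langle t\rangle$ of cyclic subgroups generated by two members of~$S$ --- for example, one of them generating $R\times\integer_r$ (or~$R$) and the other being a reflection --- then \cref{AlspachSemiProd} finishes at once. If $S=S_0\cup\{a\}$ with $a$ a reflection (so $|a|=2$), $\langle S_0\rangle=R\times\integer_r$, and $\#S_0\ge2$, then $G=\langle a\rangle\ltimes\langle S_0\rangle$ and \cref{HamConnInSubgrp}, taken with $H=\integer_p$, finishes. In every other case one constructs the required nonvanishing-voltage quotient cycle explicitly, the prototype being a ``winding'' cycle. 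For instance, when $S=\{a,c\}$ with $a$ and~$c$ both mapping to reflections of~$D_{2pq}$ and (as connectedness then forces) both of order~$2r$, and with $\delta:=ac$ a rotation generating $G'=R$, one has $G=\langle a\rangle\ltimes R$ with $\langle a\rangle\iso\integer_{2r}$; since $a^{2r-1}c^{-1}=\delta$ in~$G$, the closed walk $\bigl(\qt{a}^{2r-1},\qt{c}^{-1}\bigr)^{q}$ in $\Cay(G/\integer_p;S)$ winds once around each of the $q$ cosets of~$\langle\qt{a}\rangle$, hence is a hamiltonian cycle of that quotient, and its voltage $\delta^{q}$ generates~$\integer_p$; its \cref{FGL} lift $\bigl(a^{2r-1},c^{-1}\bigr)^{pq}$ is then a hamiltonian cycle of~$\Cay(G;S)$. (If instead $\delta$ has order $pqr$, the corresponding cycle obtained from the index-two quotient $\Cay(G/(R\times\integer_r);S)=\Cay(\integer_2;S)$ already works.)

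The main obstacle is making this last step uniform: one must show that, once the easy configurations are excluded and~$G$ is neither of the two ``product'' shapes above, the constraints (vanishing of the voltages of all hamiltonian cycles of the two quotients, together with the noncongruences) confine the generators to so few cosets of $R\times\integer_r$ that a winding-type hamiltonian cycle of $\Cay(D_{2q}\times\integer_r;S)$ or $\Cay(D_{2p}\times\integer_r;S)$ with nontrivial voltage into $\integer_p$ or~$\integer_q$ can always be written down. This amounts to a concrete analysis of the hamiltonian cycles of the two Keating-Witte quotient graphs and of their voltages, and it is where the bulk of the case work lies.
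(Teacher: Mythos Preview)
Your overall plan---lift hamiltonian cycles from $G/\integer_p$ (or $G/\integer_q$) via \cref{FGL}, using \cref{KeatingWitte} to guarantee cycles in those quotients---is the same as the paper's, and your use of \cref{NormalEasy}, \cref{AlspachSemiProd}, and \cref{HamConnInSubgrp} for the easy shapes matches the paper's Case~1.

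There is, however, a concrete gap. In your two-generator reflection case you assert that ``connectedness then forces both of order~$2r$,'' but this is false: take $a=f_1\in D_{2pq}$ a pure reflection and $c=f_2z$ with $f_2$ a reflection, $f_1f_2$ of order~$pq$, and $z$ a generator of~$\integer_r$; then $\{a,c\}$ generates~$G$, yet $|a|=2$ and your winding walk $(a^{2r-1},c^{-1})^q$ is not even a path. None of your stated reductions excludes this, because you only invoke \cref{DoubleEdge} modulo $\integer_p$ or~$\integer_q$. The paper closes this hole at the outset with a reduction you omit: if $s\in S$ projects to a reflection of $D_{2pq}$ but $s\notin D_{2pq}$, then $s\equiv s^{-1}\pmod{\integer_r}$ with $s\neq s^{-1}$, and \cref{DoubleEdge} applies (hamiltonicity of $\Cay(D_{2pq};S)$ coming from \cref{Dihedral2pqr}). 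After this, every reflection-type generator actually lies in~$D_{2pq}$; since two elements of $D_{2pq}$ cannot generate~$G$, the two-reflection situation forces $\#S\ge3$ with a third generator $c=wz\in\integer_p\times\integer_r$.

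For that residual case the paper does not attempt your open-ended voltage analysis. It uses a Maru\v si\v c-type trick instead: write down one explicit hamiltonian cycle in $\Cay(G/\integer_p;\{a,b,c\})$, then swap a short substring to get a second hamiltonian cycle whose voltage differs from the first by a provably nontrivial element of~$\integer_p$. At least one of the two voltages is therefore nonzero, and \cref{FGL} finishes. This single construction replaces the ``bulk of the case work'' you defer.
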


\begin{proof}
Let $S$ be a minimal generating set of~$G$, let $\varphi \colon G \to D_{2pq}$ be the natural projection, and let $T$ be the group of rotations in $D_{2pq}$, so $T = \integer_p \times \integer_q$.

For $s \in S$, we may assume:
	\begin{itemize}
	\item If $\varphi(s)$ has order~$2$, then $s = \varphi(s)$ has order~$2$. (Otherwise, \cref{DoubleEdge} applies with $t = s^{-1}$.)
	\item $\varphi(s)$~is nontrivial. (Otherwise, $s \in \integer_r \subset Z(G)$, so \fullcref{NormalEasy}{Z} applies.)
	\end{itemize}

Since $\varphi(S)$ generates $D_{2pq}$, it must contain at least one reflection (which is an element of order~$2$). So $S \cap D_{2pq}$ contains a reflection.

\setcounter{case}{0}

\begin{case}
Assume $S \cap D_{2pq}$ contains only one reflection.
\end{case}
Let $a \in S \cap D_{2pq}$, such that $a$ is a reflection.

Let $S_0 = S \smallsetminus \{a\}$. Since $\langle S_0 \rangle$ is a subgroup of the cyclic, normal subgroup $T \times \integer_r$, we know $\langle S_0 \rangle$ is normal. Therefore $G = \langle a \rangle \ltimes \langle S_0 \rangle$, so:
	\begin{itemize}
	\item If $\#S_0 = 1$, then \cref{AlspachSemiProd} applies.
	\item If $\#S_0 \ge 2$, then \cref{HamConnInSubgrp} applies with $H = T$, because $T \times \integer_r$ is abelian of odd order.
	\end{itemize}

\begin{case}
Assume $S \cap D_{2pq}$ contains at least two reflections.
\end{case}
Since no minimal generating set of $D_{2pq}$ contains three reflections,\refnote{Not3ReflectionsRef}
 the minimality of~$S$ implies that $S \cap D_{2pq}$ contains exactly two reflections; say $a$ and $b$ are reflections.

Let $c \in S \smallsetminus D_{2pq}$, so $\integer_r \subset \langle c \rangle$. Since $|c| > 2$, we know $\varphi(c)$ is not a reflection, so $\varphi(c) \in T$. The minimality of $S$ (combined with the fact that $\#S > 2$) implies $\langle \varphi(c) \rangle \neq T$.\refnote{phi(c)neqTRef} 
Since $\varphi(c)$ is nontrivial, this implies we may assume $\langle \varphi(c) \rangle = \integer_p$ (by interchanging $p$ and~$q$ if necessary).  Hence, we may write
	$$ \text{$c = w z$ with $\langle w \rangle = \integer_p$ and $\langle z \rangle = \integer_r$} .$$

We now use the argument of \cite[Case~5.3, p.~96]{KeatingWitte}, which is based on ideas of D.\,Maru\v si\v c \cite{Marusic-HamCircCayGrfs}. Let 
	$$\qt{G} = G/\integer_p = \qt{D_{2pq}} \times \integer_r = \qt{D_{2pq}} \times \langle \qt{c} \rangle .$$
Then $\qt{D_{2pq}} \iso D_{2q}$, so $(a,b)^q$ is a hamiltonian cycle in $\Cay \bigl( \qt{D_{2pq}} ; a,b \bigr)$. With this in mind, it is easy to see that
	$$ \Bigl(c^{r-1}, a, \bigl( (b,a)^{q-1},  c^{-1}, (a,b)^{q-1}, c^{-1} \bigr)^{(r-1)/2}, (b,a)^{q-1}, b \Bigr) .$$
is a hamiltonian cycle in $\Cay(\qt{G};S)$.\refnote{D2qxZrRef} This contains the string 
	$$\bigl( c,a, (b,a)^{q-1},  c^{-1}, a \bigr) ,$$
which can be replaced with the string
	$$ \bigl( b,c, (b,a)^{q-1}, b, c^{-1} \bigr) $$
to obtain another hamiltonian cycle.\refnote{D2qxZrOtherRef}
Since 
	\begin{align*}
	 ca (ba)^{q-1} c^{-1} a 
	&=  \bigl(  ca c^{-1} a \bigr) (ba)^{-(q-1)}
	&& \text{($ba \in T$ is inverted by~$a$)}
	\\&= \bigl(   (wz)a (wz)^{-1} a  \bigr)  (ba)^{-(q-1)}
	\\&= \bigl(   w^2  \bigr)  (ba)^{-(q-1)}
	&& \text{($a$ inverts~$w$ and centralizes~$z$)}
	\\&\neq \bigl(   w^{-2}  \bigr)  (ba)^{-(q-1)}
	\\&= \bigl( b (wz)  b (wz)^{-1} \bigr)  (ba)^{-(q-1)}
	&& \text{($b$ inverts~$w$ and centralizes~$z$)}
	\\&= \bigl( bc  b c^{-1} \bigr)  (ba)^{-(q-1)}
	\\&= bc (ba)^{q-1} b c^{-1}  , 
	&& \text{($ba \in T$ is inverted by~$b$)}
	\end{align*}
these two hamiltonian cycles have different voltages. Therefore at least one of them must have a nontrivial voltage. This nontrivial voltage must generate $\integer_p$, so \cref{FGL} provides a hamiltonian cycle in $\Cay(G;S)$.
\end{proof}

\begin{prop} \label{pin235}
Suppose
	\begin{itemize}
	\item $|G| = 30p $, where $p$~is prime, 
	and
	\item $|G|$ is not square-free {\upshape(}i.e., $p \in \{2,3,5\}${\upshape)}.
	\end{itemize}
Then every Cayley graph on~$G$ has a hamiltonian cycle.
\end{prop}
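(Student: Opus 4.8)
The plan is to exploit the fact that, because $|G| = 30p$ with $p \in \{2,3,5\}$, the prime~$p$ is the only one whose square divides~$|G|$; hence a Sylow $p$-subgroup~$P$ has order~$p^2$ and is therefore abelian. At the outset we may assume $G$ is nonabelian (Cayley graphs on abelian groups being trivially hamiltonian) and, by \cref{KeatingWitte}, that $G'$ is not cyclic of prime-power order; fix a minimal generating set~$S$ of~$G$. When $p \in \{2,3\}$ the order is $60 = 12\cdot5$ or $90 = 18\cdot5$ — orders with a smaller prime cofactor, already covered by earlier papers in this program (which in particular dispose of $A_5$, the unique nonsolvable group whose order divides~$30p$) — so, although the methods below apply uniformly, I concentrate on the essentially new case $p = 5$, $|G| = 150$, where $G$ is solvable.

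For $|G| = 150$ I would first classify~$G$ by a Sylow/transfer analysis. Counting Sylow $5$-subgroups gives $n_5 \in \{1,6\}$. If $n_5 = 6$ then $N_G(P) = P$, so Burnside's normal $p$-complement theorem yields a normal subgroup~$K$ with $|K| = 6$; since $\gcd\bigl(|P|,\,|\Aut K|\bigr) = 1$ the extension splits as $G \iso P \times K$, whence $G'$ is trivial or~$\integer_3$ and we are done. If $P \normal G$, pick a complement~$Q$ of order~$6$. When $P \iso \integer_{25}$ the action of~$Q$ on~$P$ factors through~$\integer_2$, and one checks that $G$ is one of $\integer_{150}$, $S_3 \times \integer_{25}$, $D_{50} \times \integer_3$ (all abelian or with $G'$ cyclic of prime-power order), or $D_{150}$. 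When $P \iso \integer_5 \times \integer_5$, either $Q$ has a nonzero fixed vector or an invariant line — producing a normal subgroup of order~$5$, or (if the involution of~$\integer_6$ acts trivially) a central normal subgroup of order~$2$ — or $Q$ acts irreducibly, in which case $G \iso \integer_5^2 \rtimes \integer_6$ or $G \iso \integer_5^2 \rtimes S_3$ with the faithful $2$-dimensional action.

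The cases possessing a normal subgroup~$N$ of prime order — including $D_{150}$ (which has a normal~$\integer_5$ with $D_{150}/\integer_5 \iso D_{30}$), the invariant-line cases, and the central-$\integer_2$ case — are finished by lifting: $G/N$ has order $30$, $50$, or~$75$, each covered by prior work, so $\Cay(G/N;S)$ has a hamiltonian cycle, which we lift via \cref{DoubleEdge} when two elements of $S \cup S^{-1}$ coincide modulo~$N$, via \cref{NormalEasy} when $N = \langle s\rangle$ for some $s \in S$ (its two alternatives being exhaustive because $N \cap Z(G)$ is either trivial or all of~$N$), and otherwise by the \cref{FGL} argument from the proof of \cref{DihedralxZr}: exhibit two hamiltonian cycles of the quotient differing in a short string, compute via a commutator identity that their voltages differ, deduce that one voltage is nontrivial hence generates~$N$, and apply \cref{FGL}. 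The two remaining irreducible cases $\integer_5^2 \rtimes \integer_6$ and $\integer_5^2 \rtimes S_3$ have $G' = \integer_5^2$ and $G' = \integer_3 \ltimes \integer_5^2$ respectively, neither cyclic, so by \cref{G'2gen} no minimal generating set has only two elements and thus $\#S \ge 3$; since $\integer_5^2$ is an abelian normal subgroup of odd order meeting~$Z(G)$ trivially (the eigenvalue~$1$ not occurring), I would aim, subcasing on which elements of~$S$ lie in~$\integer_5^2$, to write $G = \langle a\rangle \ltimes \langle S_0\rangle$ with $\langle S_0\rangle = \integer_5^2$ and invoke \cref{HamConnInSubgrp} with $H = \integer_5^2$, reverting to \cref{AlspachSemiProd} when $\#S_0 = 1$ and to an explicit \cref{FGL} computation in the configurations (forced in the $S_3$ case, where no suitable splitting with $\langle S_0\rangle$ abelian exists) where no such~$a$ lies in~$S$.

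The main obstacle will be precisely these two irreducible semidirect products: they have no normal subgroup of prime order and no cyclic commutator subgroup, so no standard lemma applies off the shelf and the generating set must be managed by hand — and for $\integer_5^2 \rtimes S_3$ a bespoke hamiltonian-cycle construction together with a \cref{FGL} voltage computation is likely unavoidable. The secondary difficulty is the explicit voltage computation for $D_{150}$ and its prime-index quotients, where $G'$ is cyclic but not of prime-power order, so that \cref{KeatingWitte} is unavailable. Verifying that the Sylow-theoretic classification above is exhaustive is routine but must be done with care, since the whole argument rests on the resulting list of structures being complete.
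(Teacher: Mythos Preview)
The paper's own proof is a one-line citation: the three possible orders $60$, $90$, $150$ are each handled by prior work (\cite[Props.~7.2 and~9.1]{M2Slovenian-LowOrder} and \cite{GhaderpourMorris-150}) or by exhaustive computer search. Your proposal instead attempts to reprove the case $|G|=150$ by hand, essentially reconstructing the content of \cite{GhaderpourMorris-150}. That is a legitimate and far more ambitious route, and your structural outline (Sylow analysis, Burnside transfer, then lifting from a prime-index quotient via \cref{DoubleEdge}, \cref{NormalEasy}, or \cref{FGL}) is largely sound; but, as you yourself acknowledge, the irreducible semidirect products and $D_{150}$ are left to ``bespoke'' constructions not actually carried out, so this remains a plan rather than a proof.

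There is moreover a genuine error in the plan. You invoke the contrapositive of \cref{G'2gen} to conclude that $\integer_5^2 \rtimes \integer_6$ and $\integer_5^2 \rtimes S_3$ (irreducible action) admit no $2$-element generating set. That lemma, however, is only valid when $G'$ is cyclic --- the paper applies it exclusively under the hypotheses of \cref{SquareFree} --- and it fails here. Indeed, in $G = \integer_5^2 \rtimes \integer_6$ with the faithful irreducible action, take $a$ a generator of the $\integer_6$ factor and any nonzero $v \in \integer_5^2$: since $a$ has no eigenvector over $\GF(5)$, the vectors $v$ and $v^a$ span $\integer_5^2$, so $\langle a, v\rangle = G$; yet $\langle [a,v]\rangle \iso \integer_5 \neq \integer_5^2 = G'$. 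Thus this group \emph{is} $2$-generated, the case $\#S = 2$ is missing from your treatment, and neither \cref{HamConnInSubgrp} nor \cref{AlspachSemiProd} applies to the pair $\{a,v\}$, since $|a|\cdot|v| = 30 < |G|$.
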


\begin{proof} 
We know $|G|$ is either $60$, $90$, or~$150$, and it is known that every connected Cayley graph of any of these three orders has a hamiltonian cycle. This can be verified by exhaustive computer search, or see \cite[Props.~7.2 and~9.1]{M2Slovenian-LowOrder} and \cite{GhaderpourMorris-150}. 
\end{proof}

\begin{lem} \label{SquareFree}
Suppose
	\begin{itemize}
	\item $|G| = 30p$, where $p$~is prime, 
	and
	\item $p \ge 7$.
	\end{itemize}
Then 
	\begin{enumerate}
	\item  \label{SquareFree-G'cyclic}
	$G'$ is cyclic,
	\item  \label{SquareFree-G'NotZ(G)}
	$G' \cap Z(G) = \{e\}$,
	\item  \label{SquareFree-ZmxG'}
	$G \iso \integer_n \ltimes G'$, for some $n \in \integer^+$,
	and
	\item  \label{SquareFree-tau}
	if $b$ is a generator of $\integer_n$, and we choose $\tau \in \integer$, such that $x^b = x^\tau$ for all $x \in G'$, then $\gcd \bigl( \tau-1, |a| \bigr) = 1$.
	\end{enumerate}
\end{lem}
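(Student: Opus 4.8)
The idea is that this lemma is essentially a repackaging of the classical structure theorem for groups all of whose Sylow subgroups are cyclic. Since $p \ge 7$, the integer $30p = 2\cdot 3\cdot 5\cdot p$ is a product of four distinct primes, hence square-free, so every Sylow subgroup of~$G$ is cyclic of prime order. By the classical theorem on such groups (H\"older, in the square-free case; more generally Burnside--Zassenhaus), $G$ has a presentation
\[ G = \langle\, a,b \mid a^m = b^n = e,\ b^{-1}ab = a^r \,\rangle, \]
with $mn = |G|$, $\gcd(m,n) = 1$, $r^n \equiv 1 \pmod m$, and $\gcd\bigl(m,\,n(r-1)\bigr) = 1$; in particular $\gcd(m,r-1) = 1$. (Since $|G|$ is even and $\gcd(m,n)=1$, the factor~$n$ is even, so $m$ is odd, consistent with $G'$ having odd order.)

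From here I would argue as follows. First identify~$G'$: using the paper's convention, $[b,a] = a^{-1}(b^{-1}ab) = a^{r-1}$, and $\gcd(r-1,m)=1$ makes $a^{r-1}$ a generator of $\langle a\rangle$, so $\langle a\rangle \le G'$; conversely $G/\langle a\rangle \iso \integer_n$ is abelian, so $G' \le \langle a\rangle$. Hence $G' = \langle a\rangle$ is cyclic of order~$m$, which is part~\pref{SquareFree-G'cyclic}; I take $a$ to be this generator, so $|a| = m = |G'|$. Next, $\langle b\rangle$ has order~$n = |G:G'|$ and meets $G' = \langle a\rangle$ trivially (coprime orders), so it is a complement to $G'$ and $G = \langle b\rangle \ltimes G' \iso \integer_n \ltimes G'$, which is part~\pref{SquareFree-ZmxG'}; with $b$ as the chosen generator of~$\integer_n$ we have $x^b = x^r$ for every $x \in G' = \langle a\rangle$, so $\tau = r$ works, and part~\pref{SquareFree-tau} is then immediate: $\gcd(\tau-1,|a|) = \gcd(r-1,m) = 1$. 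Finally, for part~\pref{SquareFree-G'NotZ(G)}: an element $a^j \in G'$ is central iff it commutes with both generators; it automatically commutes with~$a$, and it commutes with~$b$ iff $a^{jr} = b^{-1}a^jb = a^j$, i.e.\ iff $m \mid j(r-1)$, which by $\gcd(r-1,m)=1$ forces $m\mid j$, i.e.\ $a^j = e$. Thus $G' \cap Z(G) = \{e\}$. (The abelian case $G' = \{e\}$, i.e.\ $m=1$, is covered trivially by the same argument.)

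I do not expect a genuine obstacle: the only real input is the structure theorem for groups with cyclic Sylow subgroups, which I would simply cite, and everything else is short and formal. The one point to verify carefully is that the theorem's coprimality hypothesis $\gcd\bigl(m,n(r-1)\bigr)=1$ yields exactly the conclusion $\gcd(\tau-1,|G'|)=1$ of part~\pref{SquareFree-tau}, and that part~\pref{SquareFree-G'NotZ(G)} follows from it because $G'$ is cyclic of square-free order. (Should a self-contained argument be preferred, one could instead run a Sylow/transfer analysis by hand --- show the Sylow $p$-subgroup is normal, then successively split off normal cyclic Hall subgroups for the largest remaining primes to rebuild the metacyclic structure --- but this is considerably more laborious and gives nothing beyond the cited result.)
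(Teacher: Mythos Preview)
Your proposal is correct and follows essentially the same route as the paper: both invoke the classical structure theorem for groups whose Sylow subgroups are all cyclic (the paper cites Hall's textbook, Thm.~9.4.3) to obtain the metacyclic presentation $G=\langle b\rangle\ltimes\langle a\rangle$ with $\gcd\bigl((\tau-1)|b|,\,|a|\bigr)=1$, and then read off the four conclusions exactly as you do. (Minor slip: with the paper's convention $[x,y]=x^{-1}y^{-1}xy$ one gets $[a,b]=a^{r-1}$ rather than $[b,a]$, but this does not affect your argument.)
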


\begin{proof}
Since $|G|$ is square-free (because $p \ge 7$), we know that every Sylow subgroup of~$G$ is cyclic. Therefore the conclusions follow from \cite[Thm.~9.4.3, p.~146]{Hall-ThyGrps}\footnote{The condition $[(r-1), nm] = 1$ in the statement of \cite[Cor.~9.4.3, p.~146]{Hall-ThyGrps} suffers from a typographical error --- it should say $\gcd \bigl( (r-1)n, m \bigr) = 1$.}.\refnote{SquareFreeRef}
\end{proof}

\section{Proof of the Main Theorem}

\begin{proof}[\bf Proof of \cref{30p}]
Because of  \cref{pin235}, we may assume 
	$$ p \ge 7 ,$$
so the conclusions of \cref{SquareFree} hold.

We may also assume $|G'|$ is not prime (otherwise \cref{KeatingWitte} applies). Furthermore, if $|G'| = 15p$, then $G$ is a dihedral group,\refnote{15p->dihedralRef}
 so \cref{Dihedral2pqr} applies.
In addition, if $|G'| = 15$, then $G \iso D_{30} \times \integer_p$,\refnote{G'=15Ref}
 so \cref{DihedralxZr} applies.
Thus, we may assume $|G'| = pq$, where $q \in \{3,5\}$.\refnote{G'=pqRef}
 So
	$$ \text{$G = \integer_{2r} \ltimes \integer_{pq}$, with $\{q,r\} = \{3,5\}$ (and $G' = \integer_{pq}$)}  . $$
Note that $\integer_r$ centralizes~$\integer_q$, because there is no nonabelian group of order~$15$, so $\integer_2$ must act nontrivially on~$\integer_q$.\refnote{Z2NontrivialRef}
 Therefore
	$$ \text{$y^x = y^{-1}$ whenever $y \in \integer_q$ and $\langle x \rangle = \integer_{2r}$} .$$
We also assume 
	$$ \text{$\integer_r$ does not centralize~$\integer_p$,} $$
because otherwise $G \iso D_{2pq} \times \integer_r$,\refnote{ZrCentRef}
 so \cref{DihedralxZr} applies. 

Given a minimal generating set~$S$ of~$G$, we may assume 
	$$S \cap G' = \emptyset , $$
for otherwise \fullcref{NormalEasy}{notZ} applies.

\setcounter{case}{0}

\begin{case}
Assume $\#S = 2$.
\end{case}
Write $S = \{a,b\}$.

\begin{subcase}
Assume\/ $|a|$ is odd.
\end{subcase}
This implies $a$ has order~$r$ in $G/G'$, so $(a^{-(r-1)},b^{-1}, a^{r-1},b)$ is a hamiltonian cycle in $\Cay(G/G';S)$. Its voltage is
	$$ a^{-(r-1)} b^{-1} a^{r-1}b = [a^{r-1},b] .$$
Since $\gcd(r-1,|a|) \mid \gcd(r-1,15p) = 1$,\refnote{gcd(r-1bya)Ref}
 we know $\langle a^{r-1},b\rangle = \langle a,b \rangle = G$. So $\langle [a^{r-1},b] \rangle = G'$ \csee{G'2gen}. Therefore \cref{FGL} applies.

\begin{subcase}
Assume $a$ and~$b$ both have even order.
\end{subcase}

\begin{subsubcase}
Assume $a$ has order~$2$ in $G/G'$.
\end{subsubcase}
Note that $q \nmid |a|$, since $\integer_2$ does not centralize~$\integer_q$.\refnote{Z2CentZqRef}
 Also, if $|a| = 2p$, then \cref{DoubleEdge} applies. Therefore, we may assume $|a| = 2$. 

Now $b$ must generate $G/G'$ (since $\langle a, b \rangle = G$, and $b$ has even order), so $b$~has trivial centralizer in~$\integer_{pq}$. Then, since $|a| = 2$ and $\langle a,b \rangle = G$, it follows that $a$ must also have trivial centralizer in~$\integer_{pq}$.\refnote{aTrivCentRef}
Therefore (up to isomorphism), we must have either:\refnote{order2optionsRef}
\begin{enumerate}
\item \label{|a|=2-6}
$a = x^3$ and $b =  xyw$, in $G =  \integer_{6} \ltimes( \integer_5 \times \integer_p) = \langle x \rangle \ltimes \bigl( \langle y \rangle \times \langle w \rangle \bigr)$, with $y^x = y^{-1}$ and $w^x = w^d$, where $d$ is a primitive $6\th$ root of~$1$ in $\integer_p$ (so $d^2 - d + 1 \equiv 0 \pmod{p}$),
or
\item \label{|a|=2-10} 
$a = x^5$ and $b = xyw$, in $G =  \integer_{10} \ltimes ( \integer_3 \times \integer_p ) = \langle x \rangle \ltimes \bigl( \langle y \rangle \times \langle w \rangle \bigr)$ with $y^x = y^{-1}$ and $w^x = w^d$, where $d$ is a primitive $10\th$ root of~$1$ in $\integer_p$ (so $d^4 - d^3 + d^2 - d + 1 \equiv 0 \pmod{p}$).
\end{enumerate}

For \pref{|a|=2-6}, we note that the sequence $ \bigl( (a,b^{-5})^4, a, b^5 \bigr)$ is a hamiltonian cycle in $\Cay(G/\integer_p;S)$:
\begin{align*}
\begin{array}{ccccccccccc}
\qt{e} \bya {x^3} \bybi {x^2y} \bybi {x} \bybi {y} \bybi {x^5}  \\
 \bybi {x^4y}\bya {xy^4} \bybi {y^2} \bybi {x^5y^4} \bybi {x^4y^2}  \\
 \bybi {x^3y^4} \bybi {x^2y^2}\bya {x^5y^3} \bybi {x^4y^3} \bybi {x^3y^3}  \\
 \bybi {x^2y^3} \bybi {xy^3} \bybi {y^3}\bya {x^3y^2} \bybi {x^2y^4}  \\
 \bybi {xy^2} \bybi {y^4} \bybi {x^5y^2} \bybi {x^4y^4} \bya {xy} \\
  \byb {x^2} \byb {x^3y} \byb {x^4} \byb {x^5y} \byb {e} 
. \end{array}
\end{align*}
Calculating modulo the normal subgroup $\langle y \rangle$, its voltage is
\begin{align*}
 (ab^{-5})^4 (ab^5)
 &= (ab)^4(ab^{-1})
 && \text{($b^6 = e$)}
 \\&\equiv \bigl( x^3 \, (xw)  \bigr)^4 \, \bigl( x^3 \, (xw)^{-1} \bigr)
 \\&= \bigl( x^4w  \bigr)^4 \, \bigl( (x w^{-1})^{-1} \, x^3  \bigr)
 && \text{($x^3$ inverts $w$)}
 \\&= \bigl( x^{16} w^{ d^{12} + d^8 + d^4 + 1  }  \bigr) \, \bigl( (wx^{-1}) \, x^3 \bigr)
 \\&= x^{-2}  w^{1 + d^2 -d + 2  }  x^2
 && \begin{pmatrix} \text{$x^6 = e$ and} \\ \text{$d^3 \equiv -1 \pmod{p}$)} \end{pmatrix}
\\&= x^{-2} w^{d^2  +2}  x^2
\\&= x^{-2} w^{d  +1}  x^2
 && \bigl(d^2 - d + 1 \equiv 0 \pmod{p} \bigr)
, \end{align*}
which is nontrivial. Therefore, the voltage generates~$\integer_p$, so \cref{FGL} provides a hamiltonian cycle in $\Cay(G;S)$.

For \pref{|a|=2-10}, here is a hamiltonian cycle in $\Cay(G/\integer_p;S)$:
\begin{align*}
\begin{array}{ccccccccccc}
\qt{e} \bya {x^5} \byb {x^6y} \byb {x^7} \byb {x^8y} \byb {x^9} \\
\bya{x^4} \byb {x^5y} \bya {y^2} \byb {xy^2} \byb {x^2y^2}  \\
\byb {x^3y^2} \byb{x^4y^2} \bya {x^9y} \bybi {x^8} \bybi {x^7y} \\
 \bybi {x^6} \bya {x} \bybi {y} \bya {x^5y^2} \byb {x^6y^2}  \\
\byb {x^7y^2} \bya {x^2y} \byb {x^3} \byb{x^4y} \bya {x^9y^2} \\
\bybi {x^8y^2} \bya {x^3y} \bybi {x^2} \bybi {xy} \bybi{e}
 . \end{array}
\end{align*}
Calculating modulo~$\langle y \rangle$, its voltage is 
\begin{align*}
a b^4 (a b a) b^4 & (a b^{-3} a) b^{-1} (a b^2)^2 (a b^{-1} a) b^{-3}
\\&\equiv x^5  (xw)^4 \bigl(  x^5   (xw)   x^5 \bigr)  (xw)^4  \bigl(x^5 (xw)^{-3}   x^5 \bigr)
\\& \hskip 0.75in
  \cdot  (xw)^{-1} \bigl( x^5  (xw)^2 \bigr)^2  \bigl( x^5  (xw)^{-1}  x^5 \bigr)  (xw)^{-3}
\\&= x^5  (xw)^4 \bigl(  xw^{-1} \bigr)  (xw)^4  \bigl( xw^{-1} \bigr)^{-3} 
\\& \hskip 0.75in
  \cdot  (xw)^{-1} \bigl( (x w^{-1})^2 (xw)^2 \bigr)  \bigl( xw^{-1} \bigr)^{-1} (xw)^{-3}
\\&= x^5  (x^4w^{d^3 + d^2 + d + 1}) \bigl(  xw^{-1} \bigr)   (x^4w^{d^3 + d^2 + d + 1})   \bigl( w^{d^2 + d + 1} x^{-3} \bigr)
\\& \hskip 0.75in
  \cdot (w^{-1} x^{-1}) \bigl( x^4 w^{- d^3 - d^2 + d + 1}  \bigr)  \bigl( wx^{-1} \bigr) (w^{-(d^2 + d + 1)} x^{-3})
\\&= w^{d(d^3 + d^2 + d + 1)}w^{-1} w^{d^6(d^3 + d^2 + d + 1)} w^{d^6(d^2 + d + 1)}
  \\& \hskip 0.75in
  \cdot w^{-d^9} w^{d^6(- d^3 - d^2 + d + 1)}  w^{d^6} w^{-d^7(d^2 + d + 1)}
  \\&= w^{-2d^9  + 2d^7 + 4d^6 + d^4 + d^3 + d^2 + d - 1}
. \end{align*}
Modulo~$p$, the exponent of~$w$ is:
	\begin{align*}
	-2d^9 & {}  +  2d^7 + 4d^6 + d^4 + d^3 + d^2 + d - 1
	\\&\equiv 2d^4  - 2d^2 - 4d + d^4 + d^3 + d^2 + d - 1
  	&& \text{(because $d^5 \equiv -1$)}
	\\&= 3d^4 + d^3  - d^2 - 3d - 1
	\\&= 3(d^4 - d^3 + d^2 - d + 1) + 4(d^3 - d^2  - 1)
	\\& \equiv 3(0)+ 4(d^3 - d^2  - 1)
	\\&= 4(d^3 - d^2  - 1)
	. \end{align*}
This is nonzero (mod~$p$), because $d^4 - d^3 + d^2 - d + 1 \equiv 0 \pmod{p}$ and
	$$ (d^3 - d^2)(d^3 - d^2  - 1) - (d^2 - d - 1)(d^4 - d^3 + d^2 - d + 1) = 1 .$$
Therefore the voltage generates $\langle w \rangle = \integer_p$, so \cref{FGL} applies.

\begin{subsubcase}
Assume $a$ and~$b$ both have order~$2r$ in $G/G'$.
\end{subsubcase}
Then $|a| = |b| = 2r$ (because $\integer_{2r}$ has trivial centralizer in~$\integer_{pq}$).\refnote{2rTrivCentRef}

We have $a \in b^i G'$ for some $i$ with $\gcd(i,2r) = 1$. We may assume $1 \le i < r$ by replacing $a$ with its inverse if necessary. Here is a hamiltonian cycle in $\Cay(G/G';S)$:\refnote{a=biRef}
	$$ \bigl( (a,b,a^{-1},b)^{(i-1)/2},a, b^{2r+1-2i} \bigr) .	$$
To calculate its voltage, write $a = b^i yw$, where $\langle y \rangle = \integer_q$ and $\langle w \rangle = \integer_p$.
We have $y^b = y^{-1}$ and $w^b = w^d$, where $d$ is a primitive $r\th$ or $(2r)\th$ root of unity\refnote{ror2rthrootRef}
 in~$\integer_p$. Then the voltage of the walk is:
	\begin{align*}
	(aba^{-1}b)^{(i-1)/2}a b^{2r+1-2i}
	&= \bigl( (b^i yw) b(b^i yw)^{-1}b \bigr)^{(i-1)/2}(b^i yw) b^{1-2i}
	\\&= \bigl( (b^i yw) b(w^{-1} y^{-1} b^{-i})b \bigr)^{(i-1)/2}(b^i yw) b^{1-2i}
	\\&= \bigl(  b^2    y^{-2}   w^{(d-1)d^{1-i}}\bigr)^{(i-1)/2}(b^i yw) b^{1-2i}
	&&\hbox to 0pt{\color{blue}\ (note \ref{ainbiG'-1Ref})\hss} % \refnote???
	\\&= \bigl( b^{i-1} y^{-(i-1)}  w^{(d-1)d^{1-i}(d^{i-3} + d^{i-5} + \cdots + d^2 + 1)}\bigr) (b^i yw) b^{1-2i}
	&&\hbox to 0pt{\color{blue}\ (note \ref{ainbiG'-2Ref})\hss} % \refnote???
	\\&=  b^{2i-1} y^{(i-1)+1}  w^{(d-1)d(d^{i-3} + d^{i-5} + \cdots + d^2 + 1)+1} b^{1-2i}
	.
	&&\hbox to 0pt{\color{blue}\ (note \ref{ainbiG'-3Ref})\hss} % \refnote???
\end{align*}
Now:
	\begin{itemize}
	\item The exponent of~$y$ is $(i-1) + 1 = i$. If $q \mid i$, then, since $i < r$, we must have $q = 3$, $r = 5$, and $i = 3$.\refnote{q=3&r=5&i=3Ref}
	\item The exponent of~$w$ is
		\begin{align*}
		(d-1) & d(d^{i-3} + d^{i-5} + \cdots + d^2 + 1)+1
		= d(d-1)\frac{d^{i-1} - 1}{d^2 - 1}+1
		\\&= d\frac{d^{i-1} - 1}{d+1}+1
		= \frac{d^{i} - d}{d+1}+\frac{d+1}{d+1}
		= \frac{d^{i} + 1}{d+1}
		.\end{align*}
	This is not divisible by~$p$, because $d$ is a primitive $r\th$ or $(2r)\th$ root of~$1$ in~$\integer_p$, and $\gcd(i,2r) = 1$.
	\end{itemize}
Thus, the voltage generates~$G'$ (so \cref{FGL} applies) unless $q = 3$, $r = 5$, and $i = 3$. 

In this case, since $i = 3$, we have $a = b^3 y w$. Also, we may assume $b = x$. Then a hamiltonian cycle in $\Cay(G/\integer_p;S)$ is:
\begin{align*}
\begin{array}{ccccccccccc}
\qt{e} \byai {x^7y} \byai {x^4} \byai {xy} \byai {x^8} \byai {x^5y}  \\
 \byai {x^2}\byai {x^9y} \byai {x^6} \byai {x^3y} \byb {x^4y^2}  \\
\bya {x^7y^2} \bya {y^2} \bya {x^3y^2} \bya {x^6y^2} \bya {x^9y^2}  \\
\bya {x^2y^2} \bya {x^5y^2} \bya {x^8y^2} \bya {xy^2} \byb {x^2y}  \\
\bya {x^5} \bya {x^8y} \bya {x} \bya {x^4y} \bya {x^7} \\
 \bya {y} \bya {x^3} \bya {x^6y} \bya {x^9} \byb {e} 
. \end{array}
\end{align*}
Calculating modulo~$\langle y \rangle$, and noting that $|a| = 2r = 10$, its voltage is 
\begin{align*}
a^{-9}b(a^9b)^2
&= ab (a^{-1}b)^2
\equiv \bigl( (x^3w) x \bigr) \bigl( w^{-1} x^{-2} \bigr)^2
\\&= \bigl( x^4 w^d \bigr)\bigl( w^{-1 - d^2}  x^{-4} \bigr)
 = x^4 w^{-(d^2 - d + 1)} x^{-4}
. \end{align*}
Since $d$ is a primitive $5\th$ or $10\th$ root of~$1$ in~$\integer_p$, we know that it is not a primitive $6\th$ root of~$1$, so $d^2 - d + 1 \not\equiv 0 \pmod{p}$. Therefore the voltage is nontrivial, and hence generates~$\integer_p$, so \cref{FGL} applies.

\begin{case} \label{S=3Min}
Assume $\#S = 3$, and $S$ remains minimal in $G/\integer_p = \qt{G}$.
\end{case}
Since $G = \integer_{2r} \ltimes \integer_{pq}$ and $\integer_r$ centralizes~$\integer_q$, we know $\qt{G} \iso (\integer_2 \ltimes \integer_q) \times \integer_r$. Also, since $\integer_2$ inverts $\integer_q$, we have $\integer_2 \ltimes \integer_q \iso D_{2q}$. Therefore, $\qt{G} \iso D_{2q} \times \integer_r$, so
 we may write $S = \{a,b,c\}$ with $\langle \qt{a},\qt{b} \rangle = D_{2q}$ and $\langle \qt{c} \rangle = \integer_r$.\refnote{S=3-abcRef}
  Since $S \cap G' = \emptyset$, we know that $\qt{a}$ and~$\qt{b}$ are reflections, so they have order~$2$ in $G/ \integer_p$. Therefore, we may assume $|a| = |b| = 2$, for otherwise \cref{DoubleEdge} applies. Also, since $\integer_r$ does not centralize $\integer_p$, we know that $|c| = r$.\refnote{S=3-c=rRef}
   Replacing $c$ by a conjugate, we may assume $\langle c \rangle = \integer_r$.

We may assume $\integer_r \not\subset Z(G)$ (otherwise \cref{DihedralxZr} applies),\refnote{DihedralxZrRef}
 so we may assume $[a,c] \neq e$ (by interchanging $a$ and~$b$ if necessary). Let 
	$$ W = \bigl( (b,a)^{q-1}, c, (c^{r-2}, a, c^{-(r-2)}, b)^{q-1} \bigr) .$$
Then
	$$ \text{$\bigl( W, c^{r-2}, a,c^{-(r-1)}, a \bigr)$ \ and \ $\bigl( W, c^{r-3}, a,c^{-(r-1)}, a, c \bigr)$} $$
are hamiltonian cycles in $\Cay(G/G'; S)$.\refnote{WcyclesRef}
Let $v$ be the voltage of the first of these, and let $\gamma = [a,c] \,  [a,c]^{ac}$. Then the voltage of the second is
	\begin{align*}
	v \cdot (c^{r-2} ac^{-(r-1)} a)^{-1} (c^{r-3} a c^{-(r-1)} a c) 
	&= v \cdot \bigl(a c^{r-1} ac^{-(r-2)}) (c^{r-3} a c^{-(r-1)} a c \bigr) 
	\\&= v \cdot \bigl(a c^{-1} ac^{-1} a c a c \bigr) 
	\\&= v \cdot \bigl(a c^{-1} [a,c] a c \bigr) 
	\\&= v \cdot \bigl(a c^{-1} ac [a,c]^{ac} \bigr) 
	\\&= v \cdot \bigl([a,c] \,  [a,c]^{ac} \bigr) 
	\\&= v \gamma 
	. \end{align*}
Since $[a,c]$ generates $\integer_p$, and $ac$ does not invert~$\integer_p$ (this is because $a$ inverts~$\integer_p$, and $c$~does not centralize~$\integer_p$), we know $\gamma \neq e$. Therefore $v$ and~$v \gamma$ cannot both be trivial, so at least one of them generates~$\integer_p$. Then \cref{FGL} provides a hamiltonian cycle in $\Cay(G;S)$.

\begin{case}
Assume $\#S = 3$, and $S$ does not remain minimal in $G/\integer_p$.
\end{case}
Choose a $2$-element subset $\{a,b\}$ of~$S$ that generates $G/\integer_p$.
As in \cref{S=3Min}, we have $G/\integer_p \iso D_{2q} \times \integer_r$.
From the minimality of~$S$, we see that $\langle a,b \rangle = D_{2q} \times \integer_r$ (up to a conjugate).\refnote{<ab>=DxZRef}
The projection of~$\{a,b\}$ to $D_{2q}$ must be of the form $\{f,y\}$ or $\{f, fy\}$, where $f$~is a reflection and $y$~is a rotation. Thus, using $z$ to denote a generator of~$\integer_r$ (and noting that $y \notin S$, because $S \cap G' = \emptyset$), we see that $\{a,b\}$~must be of the form\refnote{fyRef}
	\begin{enumerate}
	\item $\{f, yz\}$,
	or
	\item $\{f, fyz\}$,
	or
	\item $\{fz, yz^\ell\}$, with $\ell \not\equiv 0 \pmod{r}$,
	or
	\item $\{fz, fyz^\ell\}$, with $\ell \not\equiv 0 \pmod{r}$.
	\end{enumerate}
Let $c$ be the final element of~$S$. We may write 
	$$ \text{$c = f^i y^j z^k w$
	\quad with \ $0 \le i < 2$, \ $0 \le j < q$, \ and \  $0 \le k < r$}
	 .$$
Note that, since $S \cap G' = \emptyset$, we know that  $i$ and~$k$ cannot both be~$0$.
Let $d$ be a primitive $r\th$ root of unity in~$\integer_p$, such that 
	$$ \text{$w^z = w^d$ for $w \in \integer_p$.} $$

\begin{subcase}
Assume $a = f$ and $b = yz$.
\end{subcase}
From the minimality of~$S$, we know $\langle b,c \rangle \neq G$, so $i=0$,\refnote{a=f&b=yz-i=0Ref} 
so we must have $k \neq 0$. 

\begin{subsubcase}
Assume $k = 1$.
\end{subsubcase}
Then $b \equiv c \pmod{G'}$, so we have the hamiltonian cycles $(a,b^{-(r-1)},a,b^{r-2},c)$ and  $(a,b^{-(r-1)},a,b^{r-3},c^2)$ in $\Cay(G/G';S)$. The voltage of the first is
	\begin{align*}
	 ab^{-(r-1)}a b^{r-2}c
	&= \bigl( ab^{-(r-1)}ab^{r-1} \bigr) \bigl( b^{-1}c \bigr)
	\\&= \bigl( (f) (yz)^{-(r-1)}(f) (yz)^{r-1} \bigr) \bigl( (yz)^{-1}(y^j zw) \bigr)
	\\&= \bigl( y^{2(r-1)} \bigr) \bigl( y^{j-1}w\bigr)
		&&\hbox to 0pt{\color{blue}\ (note \ref{a=f&b=yz-k=1aRef})\hss} % \refnote???
	\\&= \begin{cases}
	y^{j+3}w & \text{if $r = 3$ and $q = 5$}, \\
	y^{j+7}w & \text{if $r = 5$ and $q = 3$} 
	\end{cases}
	\\&= y^{j-2} w
	,
		&&\hbox to 0pt{\color{blue}\ (note \ref{a=f&b=yz-k=1bRef})\hss} % \refnote???
	 \end{align*}
which generates $\integer_q \times \integer_p  = G'$ if $j \neq 2$. 

So we may assume $j = 2$ (for otherwise \cref{FGL} applies). In this case, the voltage of the second hamiltonian cycle is
	\begin{align*}
	 ab^{-(r-1)}a b^{r-3}c^2
	&= \bigl( ab^{-(r-1)}ab^{r-1} \bigr) \bigl( b^{-2}c^2 \bigr)
	\\&= \bigl( (f) (yz)^{-(r-1)}(f) (yz)^{r-1} \bigr) \bigl( (yz)^{-2}(y^2 zw)^2 \bigr)
	\\&= \bigl( y^{2(r-1)} \bigr) \bigl( y^{2}w^{d+1}\bigr)
		&&\hbox to 0pt{\color{blue}\ (note \ref{a=f&b=yz-k=1jaRef})\hss} % \refnote???
	\\&= \begin{cases}
	y^{6}w^{d+1} & \text{if $r = 3$ and $q = 5$}, \\
	y^{10}w^{d+1} & \text{if $r = 5$ and $q = 3$} 
	\end{cases}
	\\&= y w^{d+1}
	,
		&&\hbox to 0pt{\color{blue}\ (note \ref{a=f&b=yz-k=1jbRef})\hss} % \refnote???
	 \end{align*}
which generates $\integer_q \times \integer_p  = G'$.\refnote{a=f&b=yz-k=1jcRef}
 So \cref{FGL} provides a hamiltonian cycle in $\Cay(G;S)$.
 
 \begin{subsubcase}
 Assume $k > 1$.
 \end{subsubcase}
 We may replace~$c$ with its inverse, so we may assume $k \le (r-1)/2$. Therefore $r \neq 3$, so we must have $r = 5$ and $k = 2$. So $a = f$, $b = yz$, and $c = y^jz^2w$.

\begin{subsubsubcase}
Assume $j = 0$.
\end{subsubsubcase}
Here is a hamiltonian cycle in $\Cay(G/\integer_p;S)$:
\begin{align*}
\begin{array}{ccccccccccc}
\qt{e} \bya {f} \byb {fyz} \bya {y^2z} \byb {z^2} \bya {fz^2} \\
 \byb {fyz^3} \bya {y^2z^3} \byb {z^4} \bya {fz^4} \bybi {fy^2z^3} \\
 \bya {yz^3} \byb {y^2z^4} \byci {y^2z^2} \bya {fyz^2} \byc {fyz^4}  \\
\bybi {fz^3} \bya {z^3} \byb {yz^4} \bya {fy^2z^4} \byci {fy^2z^2}  \\
\bya {yz^2} \byci {y} \bya {fy^2} \byb {fz} \bya {z} \\
\bybi {y^2} \bya {fy} \byb {fy^2z} \bya {yz} \bybi {e}  
. \end{array}
\end{align*}
Letting $\epsilon \in \{\pm1\}$, such that  $w^f = w^\epsilon $, and calculating modulo~$\langle y \rangle$, its voltage is 
\begin{align*}
(ab)^4 & (a b^{-1} a b) (c^{-1} a c) (b^{-1} ab)  (a c^{-1})^2 (a b a b^{-1})^2
\\&\equiv (fz)^4 (f z^{-1} fz) (w^{-1}z^{-2} f z^2 w) (z^{-1} f z)  (f w^{-1} z^{-2})^2 (fzf z^{-1} )^2
\\&= (z^4) (e ) (w^{\epsilon -1} f) (f)  ( w^{-(\epsilon + d^2)} z^{-4}) ( e)
		&&\hbox to 0pt{\color{blue}\ (note \ref{a=f&b=yz-k>1Ref})\hss} % \refnote???
\\&= z^4 w^{-(d^2 + 1)} z^{-4}
. \end{align*}
Since $d$ is a primitive $5\th$ root of unity in $\integer_p$, we know that $d^2 + 1 \not\equiv 0 \pmod{p}$, so the voltage is nontrivial, and hence generates $\integer_p$, so \cref{FGL} applies.

\begin{subsubsubcase}
Assume $j \neq 0$.
\end{subsubsubcase}
Since $\langle a,c \rangle \neq G$, this implies $f$ centralizes $\integer_p$, so $G = D_{6} \times (\integer_5 \ltimes \integer_p)$.\refnote{jnot0Ref}

If $j = 1$ (so $c = y z^2 w$), here is a hamiltonian cycle in $\Cay(G/\integer_p;S)$:
\begin{align*}
\begin{array}{ccccccccccc}
\qt{e} \bya {f} \byb {fyz} \bya {y^2z} \byb {z^2} \bya {fz^2}  \\
\byb {fyz^3} \bya {y^2z^3} \byb {z^4} \byb {y} \bya {fy^2}  \\
\byb {fz} \bya {z} \bybi {y^2} \bya {fy} \byb {fy^2z} \\
 \bya {yz} \byb {y^2z^2} \bya {fyz^2} \byc {fy^2z^4}\bya {yz^4}   \\
\bybi {z^3} \bya {fz^3} \byb {fyz^4} \bya {y^2z^4} \bybi {yz^3} \\
\bya {fy^2z^3} \byb {fz^4} \byci {fy^2z^2} \bya {yz^2} \byci {e} 
. \end{array}
\end{align*}
Calculating modulo the normal subgroup $D_6 = \langle f, y \rangle$, its voltage is 
\begin{align*}
(ab)^4 & (ba)^2 (b^{-1} a) (ba)^2 (c) (ab^{-1} ab)^2 (c^{-1} a c^{-1})
\\&\equiv (ez)^4 (ze)^2 (z^{-1} e) (ze)^2 (z^2 w) (ez^{-1} ez)^2 (w^{-1} z^{-2} e w^{-1} z^{-2})
\\&= z^7 w^{-1} z^{-2}
\\&= z^2 w^{-1} z^{-2}
,\end{align*}
because $|z| = r = 5$. Since this voltage generates $\integer_p$, \cref{FGL} provides a hamiltonian cycle in $\Cay(G;S)$.

If $j = 2$ (so $c = y^2 z^2 w$), here is a hamiltonian cycle in $\Cay(G/\integer_p;S)$:
\begin{align*}
\begin{array}{ccccccccccc}
\qt{e} \bybi {y^2z^4} \bya {fyz^4} \byb {fy^2} \byb {fz} \bya {z}  \\
\byb {yz^2} \bya {fy^2z^2} \byb {fz^3} \bya {z^3} \byc {y^2}  \\
\bybi {yz^4} \bya {fy^2z^4} \byb {f} \byb {fyz} \bya {y^2z}  \\
\byb {z^2} \bya {fz^2} \byb {fyz^3} \bya {y^2z^3} \byc {y} \\
 \bybi {z^4} \bya {fz^4} \byb {fy} \byb {fy^2z} \bya {yz} \\
 \byb {y^2z^2} \bya {fyz^2} \byb {fy^2z^3} \bya {yz^3} \byc {e} 
. \end{array}
\end{align*}
Calculating modulo the normal subgroup $D_6 = \langle f, y \rangle$, its voltage is 
\begin{align*}
\bigl( b^{-1} ab^2 (ab)^2(ac) \bigr)^3
\equiv \bigl( z^{-1} e z^2 (ez)^2(ez^2w) \bigr)^3
= \bigl( z^5w \bigr)^3
= w^3
,\end{align*}
because $|z| = r = 5$. Since this voltage generates $\integer_p$, \cref{FGL} provides a hamiltonian cycle in $\Cay(G;S)$.

\begin{subcase}
Assume $a = f$ and $b = fyz$.
\end{subcase}
Since $\langle b, c \rangle \neq G$, we must have $c \in \langle fy, z \rangle w$,\refnote{cInfyzRef}
 so 
	$$ \text{$c = (fy)^iz^k w$ \quad with \  $0 \le i < 2$ \ and \ $0 \le k < r$.} $$

\begin{subsubcase}
Assume $k = 0$.
\end{subsubcase}
Then  $c = fyw$, so we have $c \equiv a \pmod{G'}$. Therefore $(b^{-(r-1)}, a, b^{r-1}, c)$ is a hamiltonian cycle in $\Cay(G/G';S)$. Since 
	$$b^{r-1} = (fyz)^{r-1} =  (fy)^{r-1}(z^{r-1}) = (e)(z^{-1}) = z^{-1}
	 ,
	 \hbox to 0pt{\color{blue}\hskip 1in(note \ref{3.2.1b^r-1Ref})\hss} % \refnote???
	$$
its voltage is
	$$ b^{-(r-1)} a b^{r-1} c
	= (b^{-(r-1)} a b^{r-1} a)(a c)
	= [b^{r-1}, a](a c)
	= [ z^{-1}, f] (yw)
	= yw ,$$
which generates $\integer_q \times \integer_p = G'$, so \cref{FGL} provides a hamiltonian cycle in $\Cay(G;S)$.

\begin{subsubcase}
Assume $i = 0$.
\end{subsubcase}
Then $c = z^k w$, and we know $k \neq 0$, because $S \cap G' = \emptyset$.

If $k = 1$, then $\bigl( (a,c)^{r-1}, a, b \bigr)$ is a hamiltonian cycle in $\Cay(G/G';S)$.\refnote{3.3.2HamRef}
Letting $\epsilon \in \{\pm 1\}$, such that $w^f = w^\epsilon $, its voltage is
	\begin{align*}
	(ac)^{r-1} \, a \, b
	&= (ac)^{r} \, (c^{-1} \, b)
			&&\hbox to 0pt{\color{blue}\ (note \ref{3.2.2aRef})\hss} % \refnote???
	\\&= (fzw)^{r} \bigl( (zw)^{-1} (fyz) \bigr)
	\\&= (f^r z^r w^{(\epsilon d)^{r-1} +(\epsilon d)^{r-2} + \cdots + 1})  \bigl( w^{-1} z^{-1} fyz \bigr)
			&&\hbox to 0pt{\color{blue}\ (note \ref{3.2.2bRef})\hss} % \refnote???
	\\&= f \, w^{(\epsilon d)^{r-1} +(\epsilon d)^{r-2} + \cdots +  \epsilon d} \,  fy 
			&&\hbox to 0pt{\color{blue}\ (note \ref{3.2.2cRef})\hss} % \refnote???
	\\&= w^{\epsilon ( (\epsilon d)^{r-1} +(\epsilon d)^{r-2} + \cdots + \epsilon d )} y
	\\&= w^{d( (\epsilon d)^{r-2} +(\epsilon d)^{r-3} + \cdots + 1 )} y
	.\end{align*}
Since $\epsilon d$ is a primitive $r\th$ or $(2r)\th$ root of unity in $\integer_p$, it is clear that the exponent of~$w$ is nonzero (mod~$p$).\refnote{3.2.2dRef}
 Therefore the voltage generates $\integer_p \times \integer_q = G'$, so \cref{FGL} provides a hamiltonian cycle in $\Cay(G;S)$.

We may now assume $k \ge 2$. However, we may also assume $k \le (r-1)/2$ (by replacing $c$ with its inverse if necessary). So $r = 5$ and $k = 2$. In this case, here is a hamiltonian cycle in $\Cay(G/\integer_p;S)$:
\begin{align*}
\begin{array}{ccccccccccc}
\qt{e} \bya {f} \byb {fyz} \bya {y^2z} \bybi {y} \bya {fy^2} \\
\byb {fz} \bya {z} \bybi {y^2} \bya {fy} \byb {fy^2z} \\
\bya {yz} \byb {y^2z^2} \bya {fyz^2} \byb {fy^2z^3} \bya {yz^3} \\
\byb {y^2z^4} \bya {fyz^4} \bybi {fz^3} \bya {z^3} \byb {yz^4} \\ 
\byci {yz^2} \bya {fy^2z^2} \byc {fy^2z^4} \bybi {fyz^3}  \bya {y^2z^3} \\
\byb {z^4} \bya {fz^4} \byci {fz^2} \bya {z^2} \byci {e}
.\end{array}
\end{align*}
Its voltage is 
	\begin{align*}
	(abab^{-1})^2 (ab)^4 (ab^{-1}ab)(c^{-1}ac)(b^{-1}ab)(ac^{-1})^2 
	. \end{align*}
Since the voltage is in $\integer_p$, it is a power of~$w$, and it is clear that the only terms that contribute a power of~$w$ to the product are contained in the last three parenthesized expressions (because $c$ does not appear anywhere else). Choosing $\epsilon \in \{\pm1\}$, such that $w^f = w^\epsilon $, we calculate the product of these three expressions modulo~$\langle y \rangle$: 
	\begin{align*}
	(c^{-1}ac)(b^{-1}ab)(ac^{-1})^2 
	&\equiv \bigl( (z^2 w)^{-1} f (z^2 w) \bigr) \bigl( (fz)^{-1}f (fz) \bigr) \bigl(f(z^2 w)^{-1} \bigr)^2
	\\&= \bigl( w^{\epsilon - 1} f \bigr) \bigl( f \bigr) \bigl(w^{-(\epsilon + d^2)} z^{-4} \bigr)
			&&\hbox to 0pt{\color{blue}\ (note \ref{3.2.2k>1Ref})\hss} % \refnote???
	\\&= w^{-(d^2 + 1)} z^{-4}
	\end{align*}
Since the power of~$w$ is nonzero, the voltage generates~$\integer_p$, so \cref{FGL} provides a hamiltonian cycle in $\Cay(G;S)$.

\begin{subsubcase}
Assume $i$ and~$k$ are both nonzero.
\end{subsubcase}
Since $\langle a,c \rangle \neq G$, this implies that  $f$ centralizes~$w$.\refnote{3.2.3CentRef}
Therefore $G = D_{2q} \times (\integer_r \ltimes \integer_p)$.
 Also, since $0 \le i < 2$, we know $i = 1$, so $c = fyz^k w$. We may assume $k \neq 1$ (for otherwise $b \equiv c \pmod{\integer_p}$, so \cref{DoubleEdge} applies). Since we may also assume that $k \le (r-1)/2$ (by replacing $c$ with its inverse if necessary), then we have $r = 5$ and $k = 2$.

Here is a hamiltonian cycle in $\Cay(G/\integer_p;S)$:
\begin{align*}
\begin{array}{ccccccccccc}
\qt{e} \bya {f} \byb {yz} \bya {fy^2z} \byb {y^2z^2} \bya {fyz^2} \\
 \byc{z^4} \bya {fz^4} \bybi {yz^3} \bya {fy^2z^3} \byc {y^2} \\
\bya {fy} \byb {z} \bya {fz} \byb {yz^2} \bya {fy^2z^2}  \\
\byc {y^2z^4} \bya {fyz^4} \bybi {z^3} \bya {fz^3} \byc {y}  \\
\bya {fy^2} \byb {y^2z} \bya {fyz} \byb {z^2} \bya {fz^2} \\
\byc {yz^4} \bya {fy^2z^4} \bybi {y^2z^3} \bya {fyz^3} \byc {e}
.\end{array}
\end{align*}
Calculating modulo the normal subgroup $D_6 = \langle f, y \rangle$, its voltage is 
	\begin{align*}
	\bigl( (ab)^2 a c a b^{-1} a c \bigr)^3 
	&\equiv \bigl( (ez)^2 e (z^2w) e z^{-1} e (z^2w) \bigr))^3 
	\\&= \bigl( z^4 w z w \bigr)^3 
	\\&= w^{3(d+1)}
	,
		&&\hbox to 0pt{\color{blue}\ (note \ref{3.2.3VoltRef})\hss} % \refnote???
\end{align*}
which generates $\langle w \rangle = \integer_p$,\refnote{3.2.3GenRef}
 so \cref{FGL} applies.

\begin{subcase}
Assume $a = fz$ and $b = yz^\ell$, with $\ell \neq 0$.
\end{subcase}
Since $\langle a, c \rangle \neq G$ and $\langle b, c \rangle \neq G$, we must have $c \in \langle f, z \rangle w$ and $c \in \langle y, z \rangle w$.\refnote{3.3GRef}
 So $c \in \langle z \rangle w$; write $c = z^k w$ (with $k \neq 0$, because $S \cap G' = \emptyset$). 

\begin{subsubcase}
Assume $\ell = k$.
\end{subsubcase}
Then $b \equiv c \equiv z^\ell \pmod{G'}$, so
	$$\bigl( a^{-1}, b^{-(r-1)},a,b^{r-2}, c \bigr) $$
is a hamiltonian cycle in $\Cay(G/G';S)$. Its voltage is
	\begin{align*}
	 a^{-1} b^{-(r-1)}ab^{r-2} c
	 &= (fz)^{-1} (y z^\ell)^{-(r-1)} (fz)(yz^\ell)^{r-2} (z^\ell w)
	 \\&= (f^{-1}y^{-(r-1)}f)y^{r-2}w
	 && \begin{pmatrix} \text{$z$ commutes} \\ \text{with $f$ and~$y$} \end{pmatrix}
	 \\&= (y^{r-1})y^{r-2}w
	 && \text{($f$ inverts~$y$)}
	 \\&= y^{2r-3}w
	. \end{align*}
Since $2(3) - 3 \not \equiv 0 \pmod{5}$ and $2(5) - 3 \not\equiv 0 \pmod{3}$, we have $2r-3 \not\equiv 0 \pmod{q}$, so $y^{2r-3}$ is nontrivial, and hence generates~$\integer_q$. Therefore, this voltage generates $\integer_q \times \integer_p = G'$. So \cref{FGL} provides a hamiltonian cycle in $\Cay(G;S)$.

\begin{subsubcase}
Assume $\ell \neq k$.
\end{subsubcase}
We may assume $\ell,k \le (r-1)/2$ (perhaps after replacing $b$ and/or~$c$ by their inverses). Then we must have $r = 5$ and $\{\ell,k\} = \{1,2\}$.\refnote{3.3.2r=5Ref}

For $(\ell,k) = (1,2)$, here is a hamiltonian cycle in $\Cay(G/\integer_p;S)$:
\begin{align*}
\begin{array}{ccccccccccc}
\qt{e} \bya {fz} \byb {fyz^2} \byai {y^2z} \byai {fy} \bybi {fz^4} \\
\byai {z^3} \byai {fz^2} \byai {z} \byai {f} \bybi {fy^2z^4} \\
\bya {y} \bya {fy^2z} \bya {yz^2} \bya {fy^2z^3} \bya {yz^4} \\
\bya {fy^2} \bya {yz} \bya {fy^2z^2} \bya {yz^3} \byb {y^2z^4} \\
\byai {fyz^3} \byai {y^2z^2} \byai {fyz} \byai {y^2}  \byai {fyz^4} 
\\ \byai {y^2z^3} \byb {z^4} \byai {fz^3} \byai {z^2} \byci {e}  
.\end{array}
\end{align*}
Its voltage is
	$$ a b a^{-2} b^{-1}a^{-4}b^{-1}a^9ba^{-6}ba^{-2}c^{-1} . $$
Since there is precisely one occurrence of~$c$ in this product, and therefore only one occurrence of~$w$, it is impossible for this appearance of~$w$ to cancel.  So the voltage is nontrivial, and therefore generates~$\integer_p$, so \cref{FGL} provides a hamiltonian cycle in $\Cay(G;S)$.

For $(\ell,k) = (2,1)$, here is a hamiltonian cycle in $\Cay(G/\integer_p;S)$:
\begin{align*}
\begin{array}{ccccccccccc}
\qt{e} \byai {fz^4} \byai {z^3} \byai {fz^2} \byai {z} \byai {f} \\
\byai {z^4} \byb {yz} \byai {fy^2} \byai {yz^4} \byc {y} \\
\byai {fy^2z^4} \byai {yz^3} \byai {fy^2z^2} \byc {fy^2z^3} \byai {yz^2} \\
\byai {fy^2z} \byb {fz^3} \byai {z^2} \byai {fz} \byb {fyz^3} \\
\byai {y^2z^2} \byai {fyz} \byc {fyz^2} \byai {y^2z} \byai {fy} \\
\byai {y^2z^4} \byc {y^2} \byai {fyz^4} \byai {y^2z^3} \byb {e} 
.\end{array}
\end{align*}
Choosing $\epsilon \in \{\pm1\}$, such that $w^f = w^\epsilon $, we calculate the voltage, modulo~$\langle y \rangle$: 
\begin{align*}
a^{-4} &\Bigl( \bigl( a^{-2}ba^{-2} \bigr) c a^{-3}c \bigl( a^{-2}b \bigr) \Bigr)^2
\\&\equiv (fz)^{-4} \Bigl( \bigl( (fz)^{-2} z^2 (fz)^{-2} \bigr) (zw) (fz)^{-3} (zw) \bigl( (fz)^{-2}z^2 \bigr) \Bigr)^2
\\&= z^{-4} \bigl( (z^{-2}) (zw) (fz^{-3}) (zw) (e) \bigr)^2
	&&\hbox to 0pt{\color{blue}\ (note \ref{3.3.2fzRef})\hss} % \refnote???
\\&= z^{-4} \bigl( z^{-1} w fz^{-2}w \bigr)^2
\\&= z^{-4} (w^{d^6+ \epsilon d^4 + \epsilon d^3+ d} z^{-6})
	&&\hbox to 0pt{\color{blue}\ (note \ref{3.3.2zRef})\hss} % \refnote???
\\&= z^{-4} (w^{d( \epsilon d^3 + \epsilon d^2 +2)} z^4)
.
	&&\hbox to 0pt{\color{blue}\ (note \ref{3.3.2z4Ref})\hss} % \refnote???
\end{align*}
Since $d$ is a primitive $r\th$ root of unity in~$\integer_p$, and $r = 5$, we know $d^4 + d^3 + d^2 + d + 1 \equiv 0 \pmod{5}$. Combining this with the fact that
	$$ -(d^3 + d^2 - 1)(d^3 + d^2 + 2) + (d^2 + d - 1) (d^4 + d^3 + d^2 + d + 1) = 1,  $$
and
	$$ ( d^3 + d^2 + 3)(-d^3 + -d^2 + 2) + (d^2 + d - 1) (d^4 + d^3 + d^2 + d + 1) = 5 \not\equiv 0 \pmod{p} ,$$
we see that $ \epsilon d^3 + \epsilon d^2 +2$ is nonzero in~$\integer_p$. 
Therefore the voltage is nontrivial, so it generates~$\integer_p$. Hence, \cref{FGL} provides a hamiltonian cycle in $\Cay(G;S)$.

\begin{subcase}
Assume $a = fz$ and $b = fyz^\ell$, with $\ell \neq 0$.
\end{subcase}
Since $\langle a, c \rangle \neq G$ and $\langle b, c \rangle \neq G$, we must have $c \in \langle f, z \rangle w$ and $c \in \langle fy, z \rangle w$.\refnote{3.4GRef}
 So $c \in \langle z \rangle w$; write $c = z^k w$ (with $k \neq 0$ because $S \cap G' = \emptyset$).

We may assume $k, \ell \le (r-1)/2$, by replacing either or both of $b$ and~$c$ with their inverses if necessary. We may also assume $\ell \neq 1$, for otherwise $a \equiv b \pmod{\langle y \rangle}$, so \cref{DoubleEdge} applies. Therefore, we must have $r = 5$\refnote{3.4r=5Ref}
 and $\ell = 2$. We also have $k \in \{1,2\}$.

For $k = 1$, here is a hamiltonian cycle in $\Cay(G/\integer_p; S)$:
\begin{align*}
\begin{array}{ccccccccccc}
\qt{e} \bya {fz} \bybi {yz^4} \byai {fy^2z^3} \byai {yz^2} \byb {fz^4} \\
\byai {z^3} \byai {fz^2} \byai {z} \byai {f} \bybi {yz^3} \\
\bya {fy^2z^4} \bya {y} \bya {fy^2z} \byci {fy^2} \bya {yz} \\
\bya {fy^2z^2} \byb {y^2z^4} \byai {fyz^3} \byai {y^2z^2} \byai {fyz} \\
\byai {y^2} \byai {fyz^4} \byai {y^2z^3} \byai {fyz^2} \byai {y^2z} \\
\byai {fy} \byb {z^2} \bya {fz^3} \bya {z^4} \byc {e}
. \end{array}
\end{align*}
Its voltage is 
\begin{align*}
a b^{-1} a^{-2} b a^{-4} b^{-1} a^3 c^{-1} a^2 b a^{-9} b a^2 c
\end{align*}
Calculating modulo~$y$, the product between the occurrence  of~$c^{-1}$ and the occurrence of~$c$ is 
	$$ a^2 b a^{-9} b a^2
	\equiv (fz)^2 (fz^2) (fz)^{-9}  (fz^2) (fz)^2
	= z^{-1} ,
		 \hbox to 0pt{\color{blue}\hskip 1in(note \ref{3.3.2fzRef})\hss} % \refnote???
	$$
which does not centralize~$w$. So the occurrence of~$w^{-1}$ in $c^{-1}$ does not cancel the occurrence of~$w$ in~$c$. Therefore the voltage is nontrivial, so it generates~$\integer_p$, so \cref{FGL} applies.

For $k = 2$, here is a hamiltonian cycle in $\Cay(G/\integer_p; S)$: 
\begin{align*}
\begin{array}{ccccccccccc}
\qt{e} \bya {fz} \byb {yz^3} \byb {f} \bya {z} \bya {fz^2} \\
\bya {z^3} \bya {fz^4} \bybi {yz^2} \bya {fy^2z^3} \bya {yz^4} \\
\bya {fy^2} \bya {yz} \bya {fy^2z^2} \byc {fy^2z^4} \bya {y} \\
\bya {fy^2z} \byb {y^2z^3} \bya {fyz^4} \bya {y^2} \bya {fyz} \\
\bya {y^2z^2} \bya {fyz^3} \bya {y^2z^4} \bya {fy} \bya {y^2z} \\
\bya {fyz^2} \byb {z^4} \byai {fz^3} \byai {z^2} \byci {e}
. \end{array}
\end{align*}
Its voltage is 
\begin{align*}
 a b^2 a^4 b^{-1} a^5 c a^2 b a^9 b a^{-2} c^{-1}
.\end{align*}
Calculating modulo~$y$, the product between the occurrence  of~$c$ and the occurrence of~$c^{-1}$ is 
	$$ a^2 b a^9 b a^{-2}
	\equiv (fz)^2 (fz^2) (fz)^9 (fz^2) (fz)^{-2}
	= f z^{13}
	= f z^3 ,
		 \hbox to 0pt{\color{blue}\hskip 1in(note \ref{3.4CommuteRef})\hss} % \refnote???
	$$
which does not centralize~$w$.\refnote{3.4NotCentRef}
 So the occurrence of~$w^{-1}$ in $c^{-1}$ does not cancel the occurrence of~$w$ in~$c$. Therefore the voltage is nontrivial, so it generates~$\integer_p$, so \cref{FGL} applies.

\begin{case}
Assume $\#S \ge 4$.
\end{case}
Write $S = \{s_1,s_2,\ldots,s_\ell\}$, and let $G_i = \langle s_1,\ldots,s_i \rangle$ for $i = 1,2,\ldots,\ell$. Since $S$ is minimal, we know 
	$$ \{e\} \subsetneq G_1 \subsetneq G_2 \subsetneq \cdots \subsetneq G_\ell \subseteq G . $$ 
Therefore, the number of prime factors of~$|G_i|$ is at least~$i$. Since $|G| = 30p$ is the product of only $4$ primes, and $\ell = \#S \ge 4$, we conclude that $|G_i|$ has exactly~$i$ prime factors, for all~$i$. (In particular, we must have $\#S = 4$.) By permuting the elements of $\{s_1,s_2,\ldots,s_\ell\}$, this implies that if $S_0$ is any subset of~$S$, then $|\langle S_0\rangle|$ is the product of exactly $\#S_0$ primes. In particular, by letting $\#S_0 = 1$, we see that every element of~$S$ must have prime order.
 
Now, choose $\{a,b\} \subset S$ to be a $2$-element generating set of $G/G' \iso \integer_2 \times \integer_r$. 
From the preceding paragraph, we see that we may assume $|a| = 2$ and $|b| = r$ (by interchanging $a$ and~$b$ if necessary). Since $|\langle a,b \rangle|$ is the product of only two primes, we must have $|\langle a,b \rangle| = 2r$, so $\langle a,b \rangle \iso G/G'$. Therefore 
 	$$G = \bigl(\langle a \rangle \times \langle b \rangle \bigr) \ltimes G' .$$

Since $\langle S \rangle = G$, we may choose $s_1 \in S$, such that $s_1 \notin \langle a, b \rangle \, \integer_p$. Then $\langle a,b,s_1 \rangle = \langle a,b \rangle \, \integer_q$.\refnote{4(2qr)Ref}
 Since $a$ centralizes both $a$ and~$b$, but does not centralize $\integer_q$, which is contained in $\langle a, b, s_1 \rangle$, we know that $[a, s_1]$ is nontrivial. Therefore $\langle a, s_1 \rangle$ contains $\langle a,b,s_1 \rangle' = \integer_q$. Then, since $|\langle a, s_1 \rangle|$ is only divisible by two primes, we must have $|\langle a, s_1 \rangle| = 2q$. Also, since $S \cap G' = \emptyset$, we must have $|s_1| \neq q$; therefore $|s_1| = 2$. Hence $2r  \mid|\langle b,s_1\rangle|$, so we must have $| \langle b,s_1\rangle | = 2r$. Therefore
 	$$ [b,s_1]  \in \langle b,s_1\rangle \cap \langle a, b, s_1 \rangle' =   \langle b,s_1\rangle \cap \integer_q = \{e\} ,$$
so $b$ centralizes~$s_1$. It also centralizes~$a$, so $b$ centralizes $\langle a,s_1 \rangle = \integer_2 \ltimes \integer_q$.

Similarly, if we choose $s_2 \in S$ with $s_2 \notin \langle a,b \rangle \, \integer_q$, then $a$ centralizes $\langle b, s_2 \rangle = \integer_r \ltimes \integer_p$. 

Therefore $G = \langle a,s_1 \rangle \times \langle b,s_2 \rangle$, so
	$$\Cay(G;S) \iso \Cay \bigl( \langle a,s_1 \rangle; \{a,s_1\} \bigr) \times \Cay \bigl( \langle b,s_2 \rangle; \{b,s_2\} \bigr) . $$
This is a Cartesian product of hamiltonian graphs and therefore is hamiltonian.
\end{proof}

%%% DELETE THE FOLLOWING APPENDIX FOR PUBLICATION VERSION ??? %%%

\newpage

\appendix

\section{Notes to aid the referee}

\bigskip\hrule width\textwidth \bigbreak

\begin{aid} \label{DoubleEdgeAid}
By assumption, there is a hamiltonian cycle $C = (s_i)_{i=1}^n$ in $\Cay(G/N;S)$, such that $s_i = s$, for some~$i$. Replacing $s_i$ with~$t$ does not change the hamiltonian cycle in $\Cay(G/N;S)$, because $t \equiv s = s_i \pmod{N}$, but the voltage of the new cycle is
	$$ s_1s_2 \cdots s_{i-1} t s_{i+1} s_{i+2} \cdots s_n .$$
Since $t \neq s_i$, this is not equal to the voltage of the original cycle. So at least one of the two cycles has a voltage that is $\neq e$. Since $|N|$ is prime, it is generated by any of its nontrivial elements, so \cref{FGL} applies.
\end{aid}

\begin{aid} \label{HamConnInSubgrpRef}
The walk traverses all of the vertices in~$\langle S_0 \rangle$, then the vertices in the coset $a \langle S_0 \rangle$, then the vertices in $a^2 \langle S_0 \rangle$, etc., so it visits all of the vertices in~$G$. Also, note that, for any $h \in H$, we have
	$$ \lower 5pt\hbox{$\Biggl($} \prod_{x \in \langle a \rangle} h^x \lower 5pt\hbox{$\Biggr)$} ^a
	= \prod_{x \in \langle a \rangle} h^{xa}
	= \prod_{x \in \langle a \rangle} h^{x} ,$$
so $\prod_{x \in \langle a \rangle} h^x \in C_H(a)$. Therefore, letting $h = s_1 s_2 \cdots s_m \in H$, we have 
	\begin{align*}
	 (ha)^{|a|} 
	&= a^{|a|} (a^{-|a|} h a^{|a|}) \cdots (a^{-3} h a^3) (a^{-2} h a^2) (a^{-1} h a) 
	\\&= \prod_{x \in \langle a \rangle} h^x
	&& \text{(because $a^{|a|} = e$)}
%	\begin{pmatrix}
%	 \text{$a^{|a|} = e$ and}
%	 \\ \{\, a^k \mid 1 \le k \le |a| \,\} = \langle a \rangle
%	 \end{pmatrix}
	\\&\in C_H(a)
	\\&= H \cap Z(G)
	&& \hskip-1.25in \begin{pmatrix}\text{$H \subset \langle S_0 \rangle$ and $\langle S_0 \rangle$ abelian $\Rightarrow$}
	\\ C_H(a) \subset C_H \bigl( \langle S_0, a \rangle \bigr) = C_H(G)
	\end{pmatrix}
	\\&= \{e\}
	, \end{align*}
so the walk is closed. Since the length of the walk is~$|G|$, these facts imply that it is a hamiltonian cycle in $\Cay(G;S)$.
\end{aid} 

\begin{aid} \label{Not3ReflectionsRef}
Suppose $S_0$ is a minimal generating set of~$D_{2pq}$, and $S_0$ contains $3$~reflections $a$, $a t^i$, and~$at^j$, where $t$~is a rotation that generates~$T$. Since $|D_{2pq}|$ is the product of~$3$ primes, and the minimality of~$S_0$ implies 
	$$\langle a \rangle \subsetneq \langle a,  at^i \rangle \subsetneq \langle a,  at^i , at^j \rangle ,$$
we must have $\langle a,  at^i , at^j \rangle = D_{2pq}$. From the minimality of~$S_0$, we know $\langle at^i,  at^j \rangle$ is a proper subgroup~$D_{2pq}$, so we may assume $q \mid (i-j)$ (after interchanging $p$ and~$q$ if necessary). Since $\langle a,  at^i \rangle$ and $\langle a,  at^j \rangle$ must also be proper subgroups (and are not equal to each other), we may assume $p \mid i$ and $q \mid j$ (after interchanging $i$ and~$j$ if necessary). Then
	$$ q \mid  (i-j) + j = i .$$
So $pq \mid i$, which means $a t^i = a$. This contradicts the fact that $a$ and $at^i$ are two different reflections.
\end{aid}

\begin{aid} \label{phi(c)neqTRef}
If $\langle \varphi(c) \rangle = T$, then $\langle c \rangle = T \times \integer_r$ has index~$2$ in~$G$. So $\langle a,c \rangle = G$, which contradicts the fact that $S$ is a minimal generating set.
\end{aid}

\begin{aid} \label{D2qxZrRef}  \ 
\\
\includegraphics{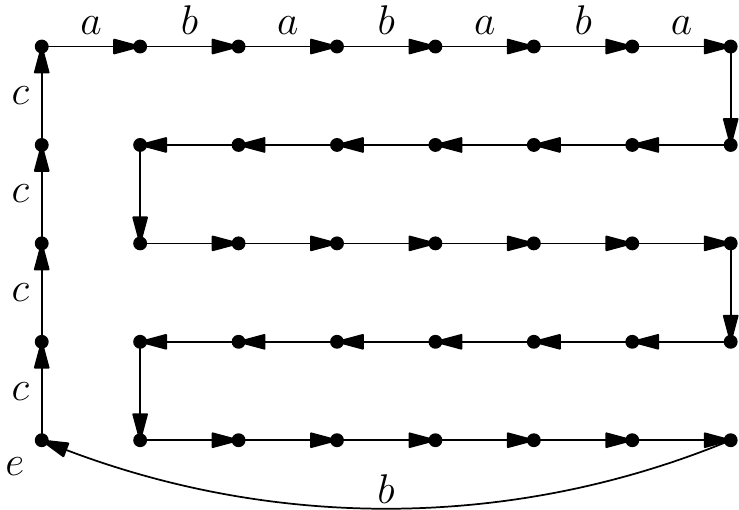}
\end{aid}
%texpreamble
%("  \usepackage{amsmath}
% ");
%defaultpen(  fontcommand("\normalfont") + fontsize(10) ); 
%
%from graph access *;
%unitsize(1cm);
% 
%pair a = (1,0), c = (0,1), ai = (-1,0), ci = (0,-1);
% 
%dotfactor=8;
%
%for(int i = 0; i < 8; ++i) {
%	for(int j = 0; j < 5; ++j) {
%		dot( (i,j) );
%		}
%	}
%
%pair[] Slist = { c, c, c, c };
%pair wherenext;
%pair whereweare = (0,0);
%for(int i = 0; i < 4; ++i) {
%	wherenext = whereweare + Slist[i];
%	draw( whereweare -- wherenext, EndArrow );
%	whereweare = wherenext;
%	}
%
%pair[] Slist = { a, a,a,a,a,a,a, ci, ai, ai, ai, ai, ai, ai, ci, 
%	a,a,a,a,a,a, ci, ai, ai, ai, ai, ai, ai, ci, 
%	a,a,a,a,a,a };
%pair wherenext;
%pair whereweare = (0,4);
%for(int i = 0; i < 35; ++i) {
%	wherenext = whereweare + Slist[i];
%	draw( whereweare -- wherenext, EndArrow );
%	whereweare = wherenext;
%	}
%
%draw( (7,0){WSW}..{WNW}(0,0), EndArrow );
%
%label( "$e$", (0,0) ,2*SW);
%
%for(int i = 0; i < 4; ++i) {
%	label( "$c$", (0, i + 0.5), W );
%		}
%
%for(int i = 0; i < 3; ++i) {
%	label( "$a$", (2*i + 0.5, 4), N );
%	label( "$b$", (2*i + 1.5, 4), N );
%		}
%	label( "$a$", (2*3 + 0.5, 4), N );
%
%label( "$b$", (3.5,-0.5));

\begin{aid} \label{D2qxZrOtherRef}  
The edges from the new string are dashed.
\\
\includegraphics{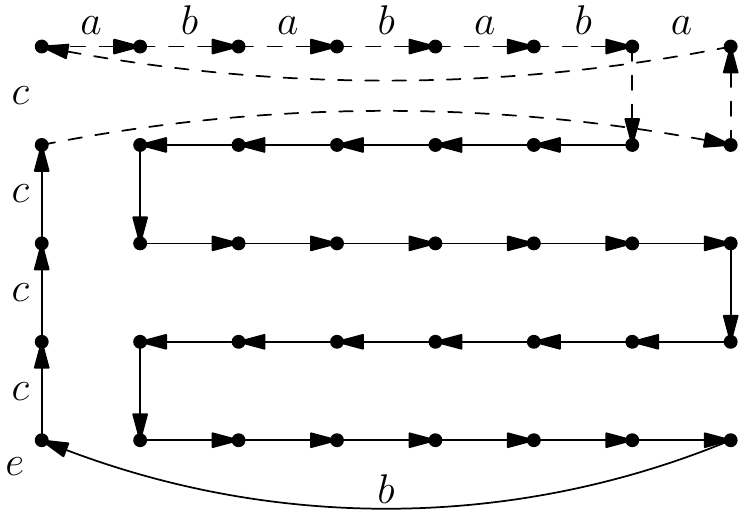}
\end{aid}

\begin{aid} \label{SquareFreeRef}
From the cited theorem of \cite{Hall-ThyGrps} (but replacing the symbol~$r$ with~$\tau$), we know that $G$ is ``metacyclic'', and there exist $a,b \in G$, such that
	\begin{itemize}
	\item $G = \langle b \rangle \ltimes \langle a \rangle$, 
	and
	\item  $\gcd \bigl( (\tau-1)|b|, |a| \bigr) = 1$, where $\tau \in \integer$ is chosen so that $a^b = a^\tau$.
	\end{itemize}

\pref{SquareFree-G'cyclic}~Since $G$ is metacyclic, we know $G'$ is cyclic. In fact, the proof points out that $G' = \langle a \rangle$. (This follows easily from the fact that $\gcd \bigl(\tau-1, |a| \bigr) = 1$.)

\pref{SquareFree-G'NotZ(G)}~Suppose $a^k \in Z(G)$. This means
	$$ e = [a^k , b] = a^{-k} (a^k)^b = a^{-k} a^{k\tau} = a^{(\tau-1)k} , $$
so $|a| \mid (\tau-1)k$. Since $\gcd \bigl(\tau-1, |a| \bigr) = 1$, this implies $|a| \mid k$, so $a^k = e$.

\pref{SquareFree-ZmxG'}~Let $\integer_n = \langle b \rangle$. Then $G = \langle b \rangle \ltimes \langle a \rangle = \integer_n \ltimes G'$.

\pref{SquareFree-tau}~This is one of the conclusions of the cited theorem of \cite{Hall-ThyGrps} (except that we have replaced~$r$ with~$\tau$).
\end{aid}

\begin{aid} \label{15p->dihedralRef}
From \cref{SquareFree}, we may write $G = \langle b \rangle \ltimes \langle a \rangle$ with $|b| = 2$ and $\langle a \rangle = G' \iso \integer_{15p}$. Choose $\tau \in \integer$, such that $a^b = a^\tau$. Since $|b| = 2$, we must have $\tau^2 \equiv 1 \pmod{15p}$, so $\tau \equiv \pm 1$ modulo each prime divisor of~$15p$. Also, we know 
	$$\gcd(\tau-1,15p) = \gcd \bigl( \tau-1, |a| \bigr) = 1 ,$$
which means $\tau \not\equiv 1$ modulo any prime divisor of~$15p$. We conclude that $\tau \equiv -1 \pmod{15p}$, so $G \iso D_{30p}$.
\end{aid}

\begin{aid} \label{G'=15Ref}
From \cref{SquareFree}, we may write $G = \langle b \rangle \ltimes \langle a \rangle$ with $\langle b \rangle \iso \integer_{2p} \iso \integer_2 \times \integer_p$ and $\langle a \rangle = G' \iso \integer_{15}$. Since 
	$$ \gcd \bigl( |\integer_p|, |\Aut(\integer_{15})| \bigr)
	 = \gcd \bigl( p, \phi(15) \bigr)
	  = \gcd(p,8)
	  = 1 ,$$
we know that $\integer_p$ centralizes $\integer_{15}$. So $G = (\integer_2 \ltimes \integer_{15}) \times \integer_p$. Since $G' = \integer_{15}$, the argument of \ref{15p->dihedralRef} implies that $\integer_2 \ltimes \integer_{15} \iso D_{30}$. 
\end{aid}

\begin{aid} \label{G'=pqRef}
From \cref{SquareFree}, we may write $G = \langle b \rangle \ltimes \langle a \rangle$, with $G' = \langle a \rangle$. Choose $\tau \in \integer$, such that $a^b = a^\tau$.

We claim $|a|$ is odd. Suppose not.
From \fullcref{SquareFree}{tau}, we know that $\gcd(\tau-1,|a|) = 1$, so $\tau$ is even. But this contradicts the fact that $\tau$ must be relatively prime to~$|a|$.
 
 So $|G'|$ is an odd divisor of $30p$. In other words, $|G'|$ is a divisor of~$15p$. However, we are assuming that $|G'|$ is not prime, and that it is not~$15$. Therefore, $|G'|$ is either $3p$ or~$5p$.
\end{aid}

\begin{aid} \label{Z2NontrivialRef}
From \cref{SquareFree}, we know $G' \cap Z(G) = \{e\}$, so some element of $\integer_{2r}$ must act nontrivially on $\integer_q$. 
\end{aid}

\begin{aid} \label{ZrCentRef}
We already know that $\integer_r$ centralizes~$\integer_q$. Obviously, it also centralizes $\integer_{2r}$. If it also centralizes~$\integer_p$, then it centralizes all of~$G$, so it is in $Z(G)$. This implies that $G = (\integer_2 \ltimes \integer_{pq}) \times \integer_r$. Since $G' = \integer_{pq}$, the argument of \ref{15p->dihedralRef} implies that $\integer_2 \ltimes \integer_{pq} \iso D_{2pq}$. 
\end{aid}

\begin{aid} \label{gcd(r-1bya)Ref}
Since $r \in \{3,5\}$, we have $r-1 \in \{2,4\}$. Since $15p$ is odd, this implies $\gcd(r-1,15p) = 1$.
\end{aid}

\begin{aid} \label{Z2CentZqRef}
If $q \mid |a|$, then $\langle a \rangle$ contains a subgroup of order~$q$, which is obviously centralized by~$a$. However, $\integer_q$ is the unique subgroup of order~$q$ in~$G$ (since a normal Sylow $q$subgroup is unique). So $a$ centralizes~$\integer_q$. Since the image of~$a$ in $G/G'$ has order~$2$, this implies that $\integer_2$ centralizes~$\integer_q$.
\end{aid}

\begin{aid} \label{aTrivCentRef}
Since $b$ has even order, there is some $k \in \integer$, such that $|b^k| = 2$. Then $\langle a \rangle$ and $\langle b^k \rangle$ are Sylow $2$-subgroups of~$G$, so they must be conjugate. Since $b$ generates $G/G'$ and centralizes $b^k$, this implies there is some $x \in G'$, such that $a^x = b^k$. Writing $G' = C_{G'}(a) \times H$, for some subgroup~$H$, we may write $x = ch$ with $c \in C_{G'}(a)$ and $h \in H$. Then
	$$ a^h =  a^{ch} = a^x = b^k \in \langle b \rangle ,$$
so $a \in \langle b,h \rangle = \langle b \rangle \ltimes H$. Since $\langle a,b \rangle = G$, we conclude that $\langle b \rangle \ltimes H = G$, so $H = G'$. Therefore $C_{G'}(a)$ is trivial.
\end{aid}

\begin{aid} \label{order2optionsRef}
We have either $r = 3$ or $r = 5$. We now show that, for a given choice of~$r$, we need only consider the single situation described in the text.

Since all elements of order~$2$ are conjugate, we may assume $a$ is the unique element of order~$2$ in~$\integer_{2r}$; in other words, $a = x^r$. Since $b$ generates $G/G'$, there is no harm in assuming that the projection of~$b$ to $\integer_{2r}$ is the generator~$x$, so $b = xg'$ for some $g' \in G'$. Since $\langle a,b \rangle = G$, we must have $\langle g' \rangle = G'$, so there is no harm in assuming that $g' = yw$.

We said earlier that $y^x = y^{-1}$. 

Choose $d \in \integer$, such that $w^x = w^d$. Since $a$ does not centralize~$\integer_p$, we know that $x^r$ does not centralize~$\integer_p$, so $d^r \not\equiv 1 \pmod{p}$. Also, we said earlier that $\integer_r$ does not centralize~$\integer_p$, so $x^2$ does not centralize~$\integer_p$, so $d^2 \not\equiv 1 \pmod{p}$. On the other hand, $x^{2r} = e$ does centralize~$\integer_p$, so $d^{2r} \equiv 1 \pmod{p}$. Therefore $d$ is a primitive $(2r)\th$~root of~$1$ in~$\integer_p$. This implies that $d^r \equiv -1 \pmod{p}$. Since $d \not\equiv -1 \pmod{p}$, we may divide by $d + 1$, so, since $r$ is odd, we have
	$$ \sum_{i=0}^{r-1} (-1)^i d^i = \frac{d^r +1}{d+1} \equiv \frac{0}{d+1} \equiv 0 \pmod{p} .$$
\end{aid}

\begin{aid} \label{2rTrivCentRef}
We have $a^{2r} \in G'$ (since $|G/G'| = 2r$), and $a$ obviously centralizes~$a^{2r}$. Since $\langle a \rangle$ has trivial centralizer in~$G'$, this implies $a^{2r} = e$, so $|a| = 2r$. 

Similarly, $|b| = 2r$.
\end{aid}

\begin{aid} \label{a=biRef} \ 
\\
\includegraphics{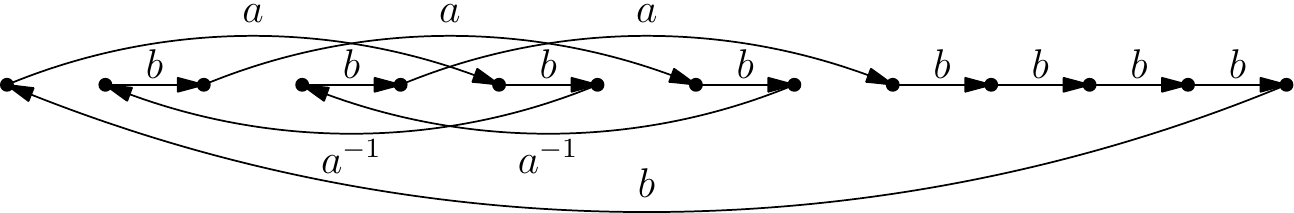}
\end{aid}
%texpreamble
%("  \usepackage{amsmath}
% ");
%defaultpen(  fontcommand("\normalfont") + fontsize(12) ); 
%
%from graph access *;
%unitsize(1cm);
%
%int r = 14, I = 5;
%
%dotfactor=8;
%
%for(int i = 0; i < r; ++i) {
%	dot( (i,0) );
%	}
%
%for(int i = 0; i < (I-1)/2; ++i) {
%	draw( (2*i,0){ENE}..{ESE}(2*i+I,0), EndArrow);
%		label("$a$", (2*i + I/2, 0.5), N );
%	draw( (2*i+I,0)--(2*i+I+1,0), EndArrow);
%		label("$b$", (2*i+I+0.5, 0), 0.5*N );
%	draw( (2*i+I+1,0){WSW}..{WNW}(2*i+1,0), EndArrow);
%		label("$a^{-1}$", (2*i + 1+ I/2, -0.5), 0.5*S );
%	draw( (2*i+1,0)--(2*i+2,0), EndArrow);
%		label("$b$", (2*i+1.5, 0), 0.5*N );
%	}
%
%draw( (I-1,0){ENE}..{ESE}(2*I-1,0), EndArrow);
%	label("$a$", (3*I/2 - 1, 0.5), N );
%
%for(int i = 2*I-1; i < r-1; ++i) {
%	draw( (i,0)--(i+1,0), EndArrow);
%		label("$b$", (i+0.5, 0), 0.5*N );
%	}
%
%draw( (r-1,0){WSW}..{WNW}(0,0), EndArrow);
%	label("$b$", (r/2 - 0.5, -1) );

\begin{aid} \label{ror2rthrootRef}
Since $|b| = 2r$, we know $d^{2r} \equiv 1 \pmod{p}$. Also, since $\langle b^2\rangle = \integer_r$ does not centralize~$y$, we have $d^2 \not\equiv 1 \pmod{p}$. Therefore $d$ is either a primitive $r\th$ or $(2r)\th$ root of unity modulo~$p$.
\end{aid}

\begin{aid} \label{ainbiG'-1Ref}
To calculate the exponents of $b$ and~$y$, we can work modulo the normal subgroup~$\langle w \rangle$. Since $\gcd(i,2r) = 1$, we know $1-i$~is odd, so $b^{1-i}$ inverts~$y$ (but $b$ inverts~$y$). Therefore
	\begin{align*}
	 (b^i y)b(y^{-1}b^{-i}) b 
	 &= b^i y^2 b^{2-i}
	 && \text{($b$ inverts $y$)}
	 \\&= b^2 y^{-2}
	 && \begin{pmatrix}
	 \text{$\gcd(i,2r) = 1$, so $2-i$~is odd,}
	\\ \text{so $b^{2-i}$ inverts~$y$}
	\end{pmatrix}
	. \end{align*}

Now, to calculate the exponent of~$y$, we can work modulo the normal subgroup~$\langle y \rangle$. Since $w^b = w^d$, we have
	$$ (b^i w)b(w^{-1} b^{-i}) b
	 = b^{i+1} w^{d-1} b^{1-i}
	 = b^{2} w^{(d-1)d^{1-i}}
	 . $$
\end{aid}

\begin{aid} \label{ainbiG'-2Ref}
To calculate the exponents of $b$ and~$y$, we work modulo~$\langle w \rangle$. Since $b$ inverts~$y$, we know $b^2$ centralizes~$y$, so 
	$$ (b^2 y^{-2})^{(i-1)/2}
	= (b^2)^{(i-1)/2} (y^{-2})^{(i-1)/2}
	= b^{i-1} y^{-(i-1)}
	. $$

\newcommand{\ww}{\underline{w}}
\newcommand{\bb}{\underline{b}}
Now, to calculate the exponent of~$w$, we  can work modulo the normal subgroup~$\langle y \rangle$. For convenience, let $\bb = b^2$, $\ww = w^{(d-1)d^{1-i}}$, and $i' = (i-1)/2$. Then
	\begin{align*}
	 (b^2 w^{(d-1)d^{1-i}})^{(i-1)/2}
	&= (\bb \ww)^{i'}
	\\&= \bb^{i'} (\bb^{-(i'-1)} \ww \bb^{i'-1} ) (\bb^{-(i'-2)} \ww \bb^{i'-2} ) \cdots 
	(\bb^{-1} \ww \bb^1 )
	(\bb^{-0} \ww \bb^0 )
	\\&= b^{i-1} (b^{-(i-3)} \ww b^{i-3} )  (b^{-(i-5)} \ww b^{i-5} ) \cdots  (b^{-2} \ww b^2 )  (b^{-0} \ww b^{0} )
	\\&= b^{i-1} (\ww^{d^{i-3}} ) (\ww^{d^{i-5}} ) \cdots (\ww^{d^{2}} ) (\ww^{d^{0}} )
	\\&= b^{i-1} \ww^{d^{i-3} + d^{i-5} + \cdots + d^{2} + 1}
	\\&= b^{i-1} w^{(d-1)d^{1-i}(d^{i-3} + d^{i-5} + \cdots + d^{2} + 1)}
	. \end{align*}
\end{aid}

\newcommand{\ww}{\underline{w}}
\begin{aid}  \label{ainbiG'-3Ref}
For convenience, let $\ww = w^{(d-1)(d^{i-3} + d^{i-5} + \cdots + d^2 + 1)}$. Then
	\begin{align*}
	\bigl( b^{i-1} y^{-(i-1)}  & w^{(d-1)d^{1-i}(d^{i-3} + d^{i-5} + \cdots + d^2 + 1)}\bigr) (b^i yw)
	\\&= \bigl( b^{i-1} y^{-(i-1)}  \ww^{d^{1-i}}\bigr) (b^i yw)
	\\&= \bigl( b^{2i-1} y^{i-1}  (\ww^{d^{1-i}})^{d^i}\bigr) (yw)
	&&	\text{($b^i$ inverts~$y$, since $i$ is odd)}
	\\&= \ b^{2i-1} y^{(i-1)+1}  \ww^{d} (w)
	&&	\begin{pmatrix}
	\text{$y$ commutes with~$w$,}
	\\ \text{since both are in $\integer_{pq}$}
	\end{pmatrix}
	. \end{align*}
Also, we have 
	$$ \ww^{d} (w)
	= (w^{(d-1)(d^{i-3} + d^{i-5} + \cdots + d^2 + 1)})^{d}(w)
	= w^{(d-1)d (d^{i-3} + d^{i-5} + \cdots + d^2 + 1)+ 1}
	. $$
\end{aid}

\begin{aid} \label{q=3&r=5&i=3Ref}
Recall that $\{q,r\} = \{3,5\}$. Since $q \mid i$ and $i < r$, we must have $q < r$, so $q = 3$ and $r = 5$. Then, since $q \mid i$ and $i < r$, we have $3 \mid i$ and $i < 5$, so it is obvious that $i = 3$.
\end{aid}

\begin{aid} \label{S=3-abcRef}
Let $c$ be an element of~$S$ with nontrivial projection to~$\integer_r$, so $\integer_r \subset \langle c \rangle$. Since $S$ is minimal and $\# \bigl( S \smallsetminus \{c\} \bigr) > 1$, we know that $|\qt{G}/\langle \qt{c} \rangle|$ cannot be prime. Therefore $\langle \qt{c} \rangle = \integer_r$.

The other elements of~$S$ must have trivial projection to~$\integer_r$. (Otherwise, the previous paragraph implies they belong to $\integer_r = \langle \qt{c} \rangle$, contradicting the minimality of~$\qt{S}$. So $\qt{a}, \qt{b} \in D_{2q}$.
\end{aid}

\begin{aid} \label{S=3-c=rRef}
We have $c^r \in \integer_p$ (since $\qt{c}^r = \qt{e}$), and $c$ obviously centralizes~$c^r$. Since $\langle \qt{c} \rangle = \integer_r$ acts nontrivially on~$\integer_p$, and hence has trivial centralizer in~$\integer_p$, this implies $c^r= e$, so $|c| = r$. 

This implies that $\langle c \rangle$ is a Sylow $r$-subgroup of~$G$, so it is conjugate to any other Sylow $r$-subgroup, including~$\integer_r$.
\end{aid}

\begin{aid} \label{DihedralxZrRef}
If $\integer_r \subset Z(G)$, then $G = \langle a,b \rangle \times \integer_r$. 
Also, since $|a| = |b| = 2$, we know that $\langle a,b \rangle$ is a dihedral group.
Therefore \cref{DihedralxZr} applies.
\end{aid}

\begin{aid} \label{WcyclesRef} 
Edges not in~$W$ are dashed.
\\[5pt]
\includegraphics{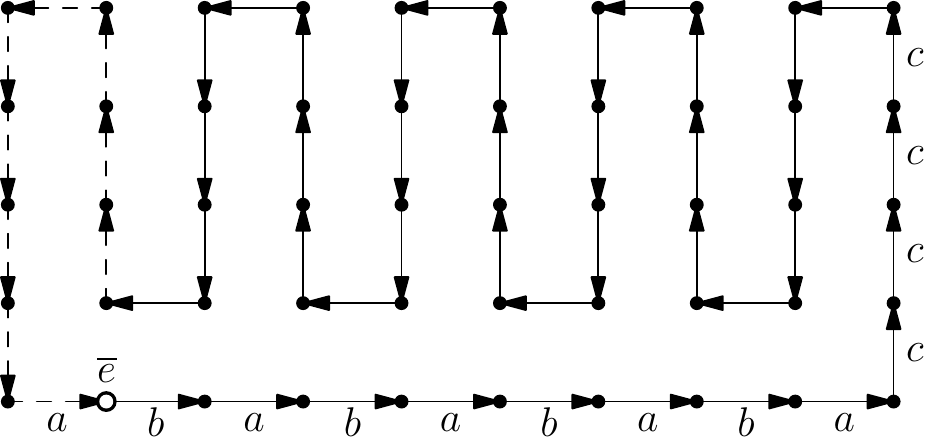}
\\[10pt]
\includegraphics{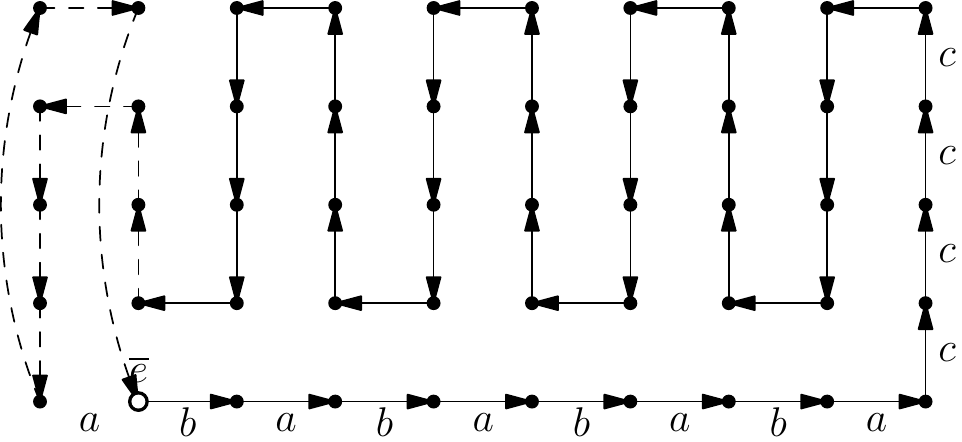}
\end{aid}

\begin{aid} \label{<ab>=DxZRef}
Let $H = \langle a, b \rangle$. Since $\langle \qt{a}, \qt{b} \rangle = \qt{G}$, we know $2qr \mid |H|$. On the other hand, the minimality of~$S$ implies $H \neq G$, so $H$ is a proper divisor of $|G| = 2pqr$. Therefore $|H| = 2qr$. Since $G$ is solvable, any two Hall subgroups of the same order are conjugate \cite[Thm.~9.3.1(2), p.~141]{Hall-ThyGrps}, so $H$ is conjugate to $D_{2q} \times \integer_r$.
\end{aid}

\begin{aid} \label{fyRef}
Let $\varphi \colon \langle a,b \rangle \to D_{2q}$ be the projection with kernel~$\integer_r$. 

\setcounter{case}{0}

\begin{case} \label{fyRef-aProjTriv}
Assume the projection of~$a$ to~$\integer_r$ is trivial. 
\end{case}
This means $a = f$. Then $b$ must project nontrivially to~$\integer_r$ (since $\langle a,b \rangle = D_{2q} \times \integer_r$). Therefore, we may assume the projection of~$b$ to~$\integer_r$ is~$z$ (since every nontrivial element of~$\integer_r$ is a generator). Therefore $b$ is either $yz$ or $fyz$, depending on whether $\varphi(b)$ is $y$ or~$fy$, respectively.

\begin{case}
Assume the projection of~$a$ to~$\integer_r$ is nontrivial. 
\end{case}
We may assume $a = fz$ (since every nontrivial element of~$\integer_r$ is a generator). 

We have $b = \varphi(b) \, z^\ell$ for some $\ell \in \integer$, and we wish to show that we may assume $\ell \not\equiv 0 \pmod{r}$. That is, we wish to show that we may assume $b \neq \varphi(b)$.
	\begin{itemize}
	\item Since $y \notin S$, we know that $b \neq \varphi(b)$ if $\varphi(b) = y$.
	\item If $b = \varphi(b) = fy$, then interchanging $a$ and~$b$ would put us in \cref{fyRef-aProjTriv}.
	\end{itemize}
\end{aid}

\begin{aid} \label{a=f&b=yz-i=0Ref} 
Suppose $i \neq 0$, which means $i = 1$. Since $y$ and~$z$ commute, we have $\langle y z \rangle = \langle y \rangle \times \langle  z \rangle$. Therefore
	$$ \langle b, c \rangle = \langle y, z, f y^j z^k w \rangle = \langle y, z, f w \rangle . $$
This contains 
	$$ ( f w)^{-1} (f w)^z = ( f w)^{-1} (f w^{d}) = w^{d-1} .$$
Since $d \neq 1$, we have $\langle w^{d-1} \rangle = \integer_p$, so $\langle b,c \rangle$ contains~$w$. Since it also contains $y$, $z$, and~$fw$, we conclude that $\langle b,c \rangle = G$.
\end{aid}

\begin{aid} \label{a=f&b=yz-k=1aRef}
We have
	\begin{align*}
	\bigl( (f)(yz)^{-(r-1)} (f) \bigr) (yz)^{r-1}
	&= f^2 (y^{-1}z) ^{-(r-1)}  (yz)^{r-1}
	&& \text{($f$ inverts~$y$ and centralizes~$z$)}
	\\&= y^{2(r-1)}
	&& \text{($|f| = 2$ and $y$ commutes with~$z$)}
	.\end{align*}
Also, $(yz)^{-1} (y^j zw) = y^{j-1} w$, since $y$ commutes with~$z$.
\end{aid}

\begin{aid}  \label{a=f&b=yz-k=1bRef}
Since $|y| = q$, it suffices to check (for each of the two possible values of~$q$) that the given exponent of~$y$ is congruent to $j-2$, modulo~$q$:
	\begin{itemize}
	\item If $q = 5$, then $j+3 \equiv j -2 \pmod{q}$.
	\item If $q = 3$, then $j+7 \equiv j -2 \pmod{q}$.
	\end{itemize}
\end{aid}

\begin{aid} \label{a=f&b=yz-k=1jaRef}
We have
	\begin{align*}
	\bigl( (f)(yz)^{-(r-1)} (f) \bigr) (yz)^{r-1}
	&= f^2 (y^{-1}z) ^{-(r-1)}  (yz)^{r-1}
	&& \text{($f$ inverts~$y$ and centralizes~$z$)}
	\\&= y^{2(r-1)}
	&& \text{($|f| = 2$ and $y$ commutes with~$z$)}
	.\end{align*}
Also, 
	\begin{align*}
	(y^2 zw)^2
	 &=  (y^2 zw)(y^2 zw)
	 \\&=(y^4 zw)( zw)
	 && \text{($y$ commutes with both~$z$ and~$w$)}
	 \\&=y^4 z^2w^{d + 1}
	 && \text{($w^z = w^{d}$)}
	, \end{align*}
so
	$$ (yz)^{-2}(y^2 zw)^2 
	= (yz)^{-2}(y^4 z^2w^{d + 1})
	= y^2w^{d + 1} ,$$
since $y$ commutes with~$z$.
\end{aid}

\begin{aid}  \label{a=f&b=yz-k=1jbRef}
Since $|y| = q$, it suffices to check (for each of the two possible values of~$q$) that the given exponent of~$y$ is congruent to $1$, modulo~$q$:
	\begin{itemize}
	\item If $q = 5$, then $6 \equiv 1 \pmod{q}$.
	\item If $q = 3$, then $10 \equiv 1 \pmod{q}$.
	\end{itemize}
\end{aid}

\begin{aid}  \label{a=f&b=yz-k=1jcRef}
Since $d$ is a primitive $r\th$ root of unity in~$\integer_p$, we know $d \not\equiv -1 \pmod{p}$. Therefore $w^{d+1}$ is nontrivial, and hence generates~$\integer_p$.
\end{aid}

\begin{aid} \label{a=f&b=yz-k>1Ref}
Since $y$ commutes with $z$, we have
	\begin{align*}
	 (fz)^4 &= f^4 z^4 = z^4 , \\
	 fz^{-1} fz &= f^2 = e ,\\
	 w^{-1} z^{-2} f z^2 w &= w^{-1} f w = w^{-1 + \epsilon } f , \\
	 z^{-1} f z &= f , \\
	 (fzfz^{-1})^2 &= (f^2)^2 = e^2 = e
	 . \end{align*}
Also,
	\begin{align*}
	(fw^{-1} z^{-2})^2
	&= (fw^{-1} z^{-2})(fw^{-1} z^{-2})
	\\&= fw^{-1}f w^{-d^2} z^{-4}
	&& \text{($z$ commutes with~$f$, but $w^z = w^{d}$)}
	\\&= f^2 w^{-\epsilon -d^2} z^{-4}
	&& \text{($w^f = w^\epsilon $)}
	\\&= w^{-(\epsilon +d^2)} z^{-4}
	&& \text{($|f| = 2$)}
	. \end{align*}
\end{aid}

\begin{aid} \label{jnot0Ref}
Since $y$ centralizes both $z$ and~$w$ (and $j \neq 0$), we have
	$$ \langle c \rangle = \langle y^j z^2 w \rangle = \langle y \rangle \times \langle z^2 w \rangle .$$
Therefore $ \langle a, c \rangle = \langle f, y, z^2 w \rangle$. 

Since $f$ centralizes~$z$, this contains 
	$$ (z^2 w)^{-1} (z^2 w)^f = (z^2 w)^{-1} (z^2 w^f) = [w,f] .$$
If $f$ does not centralize~$\integer_p$, then $[w,f]$ is nontrivial, so it generates $\integer_p = \langle w \rangle$. This implies that $\langle a, c \rangle$ contains~$w$. Since it also contains $a$, $c$, and~$z^2 w$, this would imply that $\langle a, c \rangle = G$, which is a contradiction. Therefore $f$ centralizes~$\integer_p$. 

So $f$ and~$y$ each centralize both~$z$ and~$w$. Therefore 
	$$ G = \langle f,y \rangle \times \langle z, w \rangle = D_{2q} \times (\integer_r \ltimes \integer_p) = D_{6} \times (\integer_5 \ltimes \integer_p) .$$ 
\end{aid}

\begin{aid} \label{cInfyzRef}
Since $z$ commutes with $f$ and~$y$, we have 
	$\langle fyz \rangle = \langle fy \rangle \times \langle z \rangle$. 
Also, since $c = f^i y^j z^k w$, we have $c \in \langle fy, z \rangle y^\ell w$ for some $\ell \in \integer$. Therefore
	$$ \langle b, c \rangle = \langle fy, z, c \rangle = \langle fy, z, y^\ell w \rangle .$$
This contains 
	\begin{align*}
	(y^\ell w)^{-1} (y^\ell w)^z 
	&= (y^\ell w)^{-1} (y^\ell w^z) 
	&& \text{($z$ centralizes~$y$)}
	\\& = w^{-1} w^z
	\\&= [w,z]
	. \end{align*}
Since $\integer_r$ does not centralize~$\integer_p$, this commutator is nontrivial, so it generates $\integer_p = \langle w \rangle$. Therefore $\langle b, c \rangle$ contains~$w$. It also contains $fy$, $z$, and $y^\ell w$. If $\ell \neq 0$, this implies $\langle b,c \rangle = G$, which contradicts the minimality of~$S$.

Therefore, we must have $\ell = 0$, so 
	$ c \in \langle fy, z \rangle y^\ell w = \langle fy, z \rangle w$.
\end{aid}

\begin{aid} \label{3.2.1b^r-1Ref} \ 
	\begin{itemize}
	\item $z$ commutes with both $f$ and~$y$, so $(fyz)^{r-1} = (fy)^{r-1}z^{r-1}$
	\item $fy$ is a reflection, so it has order~$2$, so $(fy)^{r-1} = e$, since $r-1$ is even.
	\item $z^r = e$, since $z \in \integer_r$, so $z^{r-1} = z^{-1}$.
	\end{itemize}
\end{aid}

\begin{aid} \label{3.3.2HamRef}
Modulo $G' = \langle y, w \rangle$, we have $a \equiv f$, $b \equiv fz$, and $c \equiv z$.
Since $f$ commutes with~$z$, we have
	$$ (ac)^{r-1} ab) \equiv (fz)^{r-1} f \, fz = f^{r+1} z^r = e , $$
since $|f| = 2$, $r+1$ is even, and $|z| = r$. Therefore, the walk in $\Cay(G/G'; S)$ is closed.
\\[10pt]
\includegraphics{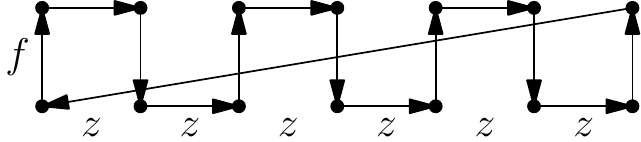}
\end{aid}
%texpreamble
%("  \usepackage{amsmath}
% ");
%defaultpen(  fontcommand("\normalfont") + fontsize(12) ); 
%
%from graph access *;
%unitsize(1cm);
% 
%pair a = (0,1), c = (1,0), ai = (0,-1), ci = (-1,0);
% 
%dotfactor=8;
%
%for(int i = 0; i < 7; ++i) {
%	for(int j = 0; j < 2; ++j) {
%		dot( (i,j) );
%		}
%	}
%
%pair[] Slist = { a, c, ai, c,  a, c, ai, c,  a, c, ai, c,  a 
%	  };
%pair wherenext;
%pair whereweare = (0,0);
%for(int i = 0; i < 13; ++i) {
%	wherenext = whereweare + Slist[i];
%	draw( whereweare -- wherenext, EndArrow );
%	whereweare = wherenext;
%	}
%
%label( "$f$", (0,0.5), W);
%
%for(int i = 0; i < 6; ++i) {
%	label( "$z$", (i+0.5,0), S);
%	}
%
%draw( (6,1)--(0,0), EndArrow );

\begin{aid} \label{3.2.2aRef} \ \\
$(ac)^{r-1} \, a \, b
= (ac)^{r-1} \bigl( (ac) (ac)^{-1} \bigr) \, a \, b
= \bigl( (ac)^{r-1} (ac) \bigr) (c^{-1}a^{-1}) \, a \, b
	= (ac)^{r} \, (c^{-1} \, b)$
\end{aid}

\begin{aid} \label{3.2.2bRef} 
	\begin{align*}
	(fzw)^{r}
	&= \bigl( (fz) w \bigr) \bigl( (fz) w \bigr) \cdots \bigl( (fz) w \bigr) \bigl( (fz) w \bigr)
	\\&=(fz)^r  \bigl( (fz)^{-(r-1)} w (fz)^{r-1} \bigr)  \bigl( (fz)^{-(r-2)} w (fz)^{r-2} \bigr) \cdots
		 \bigl( (fz)^{-1} w (fz)^{1} \bigr)  \bigl( (fz)^{-0} w (fz)^{0} \bigr) 
	\\&= f^r z^r w^{(\epsilon d)^{r-1} +(\epsilon d)^{r-2} + \cdots + 1}
	. \end{align*}
\end{aid}

\begin{aid} \label{3.2.2cRef} \ 
	\begin{itemize}
	\item $f^r = f$ because $|f| = 2$ and $r$ is odd.
	\item $|z| = r$ and $z$ commutes with both~$f$ and~$y$.
	\end{itemize}
\end{aid}

\begin{aid} \label{3.2.2dRef} 
Let $\omega \in \integer$. If
	$$ \omega^{r-2} + \omega^{r-3} + \cdots \omega + 1 \equiv 0 \pmod{p} ,$$
then 
	$$\omega^{r-1}-1 = (\omega - 1)(\omega^{r-2} + \omega^{r-3} + \cdots \omega + 1)
	 \equiv (\omega - 1)(0) = 0 \pmod{p} ,$$
so $\omega$ is an $(r-1)^{\text{st}}$ root of unity in~$\integer_p$. Therefore, it cannot be a primitive $r\th$ or $(2r)\th$ root of unity.
\end{aid}

\begin{aid} \label{3.2.2k>1Ref} 
We have
	\begin{align*}
	(z^2 w)^{-1} f (z^2 w)
	&= (w^{-1} z^{-2}) f (z^2 w)
	\\&= w^{-1} f w
	&& \text{($z$ commutes with~$f$)}
	\\&= w^{-1} (f w f) f
	&& (f^2 = e)
	\\&= w^{\epsilon - 1} f
	, 
	\\[5pt] (fz)^{-1}f (fz) 
	&=  (z^{-1} f^{-1}) f (fz) 
	\\&=  f 
	&& \text{($f$ and $z$ commute)}
	. 
	\intertext{and}
	\bigl(f(z^2 w)^{-1} \bigr)^2
	&= (f w^{-1} z^{-2}) (f w^{-1} z^{-2}) 	 
	\\&= (f w^{-1}f) ( z^{-2} w^{-1} z^2)  z^{-4}
	&& \text{($f$ and~$z$ commute)}
	\\&= (w^{-\epsilon }) (w^{-d^2} )  z^{-4}
	\\&= w^{-(\epsilon + d^2)} z^{-4}
	. \end{align*}
\end{aid}

\begin{aid} \label{3.2.3CentRef}
Since $0 \le i < 2$ and we are assuming that $i \neq 0$, we have $c = fyz^k w$, so
	$$ \langle a, c \rangle = \langle f, fy z^k w \rangle = \langle f, y z^k w \rangle .$$
Since $y$ commutes with both $z$ and~$w$, we have
	$$ \langle y z^k w \rangle = \langle y \rangle \times \langle z^k w \rangle ,$$
so $\langle a,c \rangle$ contains both $y$ and~$z^k w$. Therefore, since $f$ centralizes~$z$, it also contains
	$$ (z^k w)^{-1} (z^k w)^f = (w^{-1} z^{-k}) (z^k w^f) = w^{-1} w^f = [w,f] .$$
If $f$ does not centralize~$w$, then this commutator is nontrivial, so it generates $\integer_p = \langle w \rangle$. This implies that $\langle a,c \rangle$ contains~$w$. Since it also contains $f$, $y$, and $z^k w$ (with $k \neq 0$), we conclude that $\langle a, c \rangle = G$. This is a contradiction. So $f$ must centralize~$w$.

Hence, $f$ and $y$ each centralize both~$z$ and~$w$, so 
	$$ G = \langle f,y \rangle \times \langle z, w \rangle = D_{2q} \times (\integer_r \ltimes \integer_p) .$$
\end{aid}

\begin{aid} \label{3.2.3VoltRef}
	\begin{align*}
	\bigl( z^4 w z w \bigr)^3 
	&= \bigl( (z^{-1} w z) w \bigr)^3 
	&& (|z| = r = 5)
	\\&= \bigl( w^d w \bigr)^3 
	\\&= w^{3(d+1)}
	. \end{align*}
\end{aid}

\begin{aid} \label{3.2.3GenRef}
$d$ is a primitive $r\th$ root of unity in~$\integer_p$, so $d+1 \not\equiv 0 \pmod{p}$. Since $p \ge 7$, this implies $3(d+1) \not\equiv 0 \pmod{p}$. Therefore $w^{3(d+1)}$ is nontrivial, and hence generates~$\integer_p$.
\end{aid}

\begin{aid} \label{3.3GRef}
We have $c = f^i y^j z^k w$. 
\medskip

We claim that $j = 0$ (which means $c \in \langle f,z \rangle w$).
Since $z$ commutes with $f$, we have 
	$$\langle a \rangle = \langle fz \rangle = \langle f \rangle \times \langle z \rangle .$$
Therefore 
	$$ \langle a,c \rangle = \langle f, z, f^i y^j z^k w \rangle = \langle f, z, y^j w \rangle ,$$
which contains
	$$ (y^j w)^{-1} (y^j w)^z = (w^{-1} y^{-j})(y^j w^z) = w^{-1} w^z = [w, z] .$$
Since $\integer_r$ does not centralize~$\integer_p$, this commutator is nontrivial, so it generates $\integer_p = \langle w \rangle$. Therefore $\langle a,c \rangle$ contains~$w$. So it contains $(y^jw)w^{-1} = y^j$. 

If $j \neq 0$, this implies that $\langle a,c \rangle$ contains~$y$. Since it also contains $f$, $z$, and~$w$, we would have $\langle a, c \rangle = G$, which is a contradiction. Therefore $j = 0$, as claimed.

\medskip

We claim that $i = 0$ (which means $c \in \langle y,z \rangle w$).
Since $z$ commutes with $y$ (and $\ell \neq 0$), we have 
	$$ \langle b \rangle = \langle y z^\ell \rangle = \langle y \rangle \times  \langle z^\ell \rangle = \langle y \rangle \times  \langle z \rangle .$$
Therefore 
	$$ \langle b,c \rangle = \langle y, z, f^i y^j z^k w \rangle = \langle y, z, f^i w \rangle ,$$
which contains
	$$ (f^i w)^{-1} (f^i w)^z = (w^{-1} f^{-i})(f^i w^z) = w^{-1} w^z = [w, z] .$$
Since $\integer_r$ does not centralize~$\integer_p$, this commutator is nontrivial, so it generates $\integer_p = \langle w \rangle$. Therefore $\langle b,c \rangle$ contains~$w$. So it contains $(f^iw)w^{-1} = f^i$. 

If $i \neq 0$, this implies that $\langle b,c \rangle$ contains~$f$. Since it also contains $y$, $z$, and~$w$, we would have $\langle b, c \rangle = G$, which is a contradiction. Therefore $i = 0$, as claimed.

\medskip

Since $i = 0$ and $j = 0$, we have $c = z^k w$.
\end{aid}

\begin{aid} \label{3.3.2r=5Ref}
If $r = 3$, then $(r-1)/2 = 1$, so $\ell = k = 1$, contradicting the fact that $\ell \neq k$.

Thus, we must have $r = 5$, so $(r-1)/2 = 2$. Since $\ell \neq k$, we must have $\{\ell,k\} = \{1,2\}$.
\end{aid}

\begin{aid} \label{3.3.2fzRef}
Recall that $f$ commutes with~$z$, and $f^2 = e$
\end{aid}

\begin{aid} \label{3.3.2zRef}
	\begin{align*}
	\bigl( z^{-1} w fz^{-2}w \bigr)^2
	&= \bigl( (z^{-1} w z) f (z^{-3}w z^3) z^{-3} \bigr)^2
	&& \text{($f$ commutes with~$z$)}
	\\&= \bigl( (w^{d}) f (w^{d^3}) z^{-3} \bigr)^2
	\\&= \bigl( f w^{d^3+ \epsilon d} z^{-3} \bigr)^2
	\\&= \bigl( f w^{d^3+ \epsilon d} z^{-3} \bigr)\bigl( f w^{d^3+ \epsilon d} z^{-3} \bigr)
	\\&= ( f w^{d^3+ \epsilon d} f) (z^{-3} w^{d^3+ \epsilon d} z^3 ) z^{-6}
	&& \text{($f$ commutes with~$z$)}
	\\&= ( w^{\epsilon (d^3+ \epsilon d)}) (w^{d^3(d^3+ \epsilon d)} ) z^{-6}
	\\&= ( w^{d^6+ \epsilon d^4 + \epsilon d^3+ d} ) z^{-6}
	&& (\epsilon ^2 = 1)
	. \end{align*}
\end{aid}

\begin{aid} \label{3.3.2z4Ref}
Since $d$ is an $r\th$ root of unity in~$\integer_p$, and $r = 5$, we have 
	$ d^6 \equiv d \pmod{p} $, so, modulo~$p$, we have
	$$ d^6+ \epsilon d^4 + \epsilon d^3+ d 
	\equiv d + \epsilon d^4 + \epsilon d^3+ d 
	= \epsilon d^4 + \epsilon d^3+ 2d 
	= d( \epsilon d^3 + \epsilon d^2+ 2) 
	. $$ 

Also, since $|z| = r = 5$, we have $z^{-6} = z^4$.
\end{aid}

\begin{aid} \label{3.4GRef}
If we write $c = f^i y^j z^k w$, then, exactly as in note~\ref{3.3GRef}, we must have $j = 0$ (which means $c \in \langle f,z \rangle w$).

\medskip

We may also write write $c = (fy)^i y^{j'} z^k w$. 
We claim that $j' = 0$ (which means $c \in \langle fy,z \rangle w$).
Since $z$ commutes with both $f$ and~$y$ (and $\ell \neq 0$), we have 
	$$ \langle b \rangle = \langle fy z^\ell \rangle = \langle fy \rangle \times  \langle z^\ell \rangle = \langle fy \rangle \times  \langle z \rangle .$$
Therefore 
	$$ \langle b,c \rangle = \langle fy, z, (fy)^i y^{j'} z^k w \rangle = \langle fy, z, y^{j'}w \rangle ,$$
which contains
	$$ (y^{j'} w)^{-1} (y^{j'} w)^z = (w^{-1} y^{-j'})(y^{j'} w^z) = w^{-1} w^z = [w, z] .$$
Since $\integer_r$ does not centralize~$\integer_p$, this commutator is nontrivial, so it generates $\integer_p = \langle w \rangle$. Therefore $\langle b,c \rangle$ contains~$w$. So it contains $(y^{j'}w)w^{-1} = y^{j'}$. 

If $j' \neq 0$, this implies that $\langle b,c \rangle$ contains~$y$. Since it also contains $fy$, $z$, and~$w$, we would have $\langle b, c \rangle = G$, which is a contradiction. Therefore $j' = 0$, as claimed.

\medskip

Therefore 
	$$c \in \langle f,z \rangle w \cap \langle fy,z \rangle w 
	= \bigl( \langle f,z \rangle  \cap \langle fy,z \rangle \bigr) w 
	= \langle z \rangle w 
	. $$
\end{aid}

\begin{aid} \label{3.4r=5Ref}
If $r = 3$, we have $1 < \ell \le (r-1)/2 = 1$, which is impossible. Therefore $r = 5$.
So we have $1 < \ell \le (r-1)/2 = 2$, which implies $\ell = 2$. Also, since $1\le k \le  (r-1)/2 = 2$, we have $k \in \{1,2\}$.
\end{aid}

\begin{aid} \label{3.4CommuteRef}
Recall that $f$ commutes with~$z$, and $f^2 = e$.  Also, we have $z^5 = z^r = e$, so $z^{13} = z^3$.
\end{aid}

\begin{aid} \label{3.4NotCentRef}
We have
	$$ (fz^3)^{-1} w(fz^3) 
	= z^{-3} (f^{-1} w f) z^3
	= z^{-3} w^{\epsilon } z^3
	= w^{\epsilon d^3}
	. $$
Since $d$ is a primitive $r\th$ root of unity in~$\integer_p$, we know $d^3 \not\equiv \pm1 \pmod{p}$. Therefore $\epsilon d^3 \not\equiv 1 \pmod{p}$, so $ (fz^3)^{-1} w(fz^3)  \neq w$.
\end{aid}

%\begin{aid} \label{4primeRef}
%Let $r = \#(S \smallsetminus S_0)$, and let $\ell$ be the number of prime divisors of~$\langle S_0 \rangle$. By adding one generator at a time, we can construct a sequence of subsets $S_0 \subsetneq S_1 \subsetneq \cdots \subsetneq S_{r} = S$. Since $S$ is minimal we have $|\langle S_i \rangle| < |\langle S_{i+1} \rangle|$, so $|\langle S_i \rangle|/ |\langle S_0 \rangle|$ is the product of at least $i$ primes, for every~$i$. Since 
%	$$ |\langle S_r \rangle | = |\langle S \rangle | = |G| $$
%is the product of only~$4$ primes, we conclude that 
%	$\ell + r \le 4$.
%
%Similarly, let us use the minimality of~$S$ to show that $\#S_0 \le \ell$.  By adding one generator at a time, we can construct a sequence of subsets $\emptyset \subsetneq S'_1 \subsetneq S'_2 \subsetneq \cdots \subsetneq S'_{\#S_0} = S_0$. Since $S$ is minimal,  we have $|\langle S'_i \rangle| < |\langle S'_{i+1} \rangle|$, for every~$i$, so $ |\langle S_0 \rangle | = |\langle S'_{\#S} \rangle|$ is the product of at least $\#S$ primes, as desired.
%
%Therefore
%	$$ 4 \le \#S = \#S_0 + r \le \ell + r \le  4.$$
%We must have equality throughout, so $\ell = \#S_0$.
%\end{aid}

\begin{aid} \label{4(2qr)Ref}
Since $|\langle a,b,s_1 \rangle|$ is the product of only three primes (and is divisible by $|\langle a,b \rangle| = 2r$), it must be either $2qr$ or $2pr$.

However, if $|\langle a,b,s_1 \rangle| = 2pr$, then $\langle a,b,s_1 \rangle$ contains $\integer_p$ (since $\integer_p$ is a normal Sylow $p$-subgroup of~$G$, and hence is the unique subgroup of order~$p$ in~$G$). So 
	$$ \langle a,b,s_1 \rangle \supset \langle a,b \rangle \, \integer_p .$$
Since they have the same order, these two subgroups must be equal, so 
	$$ s_1 \in  \langle a,b,s_1 \rangle = \langle a,b \rangle \, \integer_p .$$
This contradicts the choice of~$s_1$.

Therefore $|\langle a,b,s_1 \rangle| = 2qr$. Since $\integer_q$ is a normal Sylow $q$-subgroup of~$G$, we know that it is the unique subgroup of order~$q$ in~$G$. So $\integer_q \subset \langle a,b,s_1 \rangle$. Hence (by comparing orders) we must have $\langle a,b,s_1 \rangle = \langle a,b \rangle \, \integer_q$.
\end{aid}


\begin{thebibliography}{[99]}

\bibitem{Alspach-lifting}
  B.\,Alspach:
 Lifting Hamilton cycles of quotient graphs,
 \emph{Discrete Math.}  78  (1989), 25--36.
 %MR1020643 (91c:05091)

\bibitem{ChenQuimpo-hamconn}
C.\,C.\,Chen and N.\,Quimpo:
On strongly hamiltonian abelian group graphs, 
in K.\,L.\,McAvaney, ed.:
\emph{Combinatorial Mathematics VIII (Proceedings, Geelong,
Australia 1980),} Springer-Verlag, Berlin, 1981, pp.~23--24.

\bibitem{CurranGallian-survey}
S.\,J.\,Curran and J.\,A.\,Gallian:
Hamiltonian cycles and paths in Cayley graphs and digraphs---a survey, 
\emph{Discrete Math.} 156 (1996) 1--18.

\bibitem{CurranMorris2-16p}
S.\,J.\,Curran, D.\,W.\,Morris, and J.\,Morris:
Cayley graphs of order $16p$ are hamiltonian,
\emph{Ars Math. Contemp.} (to appear).%
\hfil\penalty500\hfilneg\ 
\href{http://amc.imfm.si/index.php/amc/article/view/207}%
{\textsf{http://amc.imfm.si/index.php/amc/article/view/207}}

\bibitem{GhaderpourMorris-27p}
E.\,Ghaderpour and D.\,W.\,Morris:
Cayley graphs of order~$27p$ are hamiltonian,
\emph{Internat. J. Comb.} 2011, Article ID 206930, 16 pages.%
\hfil\penalty500\hfilneg\ 
\href{http://www.hindawi.com/journals/ijct/2011/206930/}{\textsf{http://www.hindawi.com/journals/ijct/2011/206930/}}

\bibitem{GhaderpourMorris-150}
E.\,Ghaderpour and D.\,W.\,Morris:
Cayley graphs of order $150$ are hamiltonian
(unpublished).%
\hfil\penalty500\hfilneg\ 
\href{http://arxiv.org/src/1102.5156/}%
{\textsf{http://arxiv.org/src/1102.5156/anc/150.pdf}}

\bibitem{Hall-ThyGrps}
M.\,Hall:
\emph{The Theory of Groups,}
Macmillan, New York, 1959.

\bibitem{JungreisFriedman} D.\,Jungreis and E.\,Friedman:
 Cayley graphs on groups of low order are hamiltonian
 (unpublished).

\bibitem{KeatingWitte} 
K.\,Keating and D.\,Witte: On Hamilton cycles
in Cayley graphs with cyclic commutator subgroup,
\emph{Ann. Discrete Math.} 27 (1985) 89--102.

\bibitem{M2Slovenian-LowOrder}
K.\,Kutnar, D.\,Maru\v si\v c, J.\,Morris, D.\,W.\,Morris, and P.\,\v Sparl: 
Hamiltonian cycles in Cayley graphs whose order has few prime factors, 
\emph{Ars Math. Contemp.} 5 (2012), no.~1, 27--71.%
\hfil\penalty500\hfilneg\ 
\href{http://amc.imfm.si/index.php/amc/article/view/177}%
{\textsf{http://amc.imfm.si/index.php/amc/article/view/177}}

\bibitem{Marusic-HamCircCayGrfs}
 D.\,Maru\v si\v c:
  Hamiltonian circuits in Cayley graphs,
  \emph{Discrete Math.} 46 (1983), no.~1, 49--54.
 
\bibitem{PakRadoicic-survey}
I.\,Pak and R.\,Radoi\v ci\'c:
Hamiltonian paths in Cayley graphs,
\emph{Discrete Math.} 309 (2009) 5501--5508.

\bibitem{Witte-HamCircCayDiag}
D.\,Witte:
On hamiltonian circuits in Cayley diagrams,
\emph{Discrete Math.} 38 (1982) 99--108.

\bibitem{WitteGallian-survey}
D.\,Witte and J.\,A.\,Gallian:
 A survey: Hamiltonian cycles in Cayley graphs, 
 \emph{Discrete Math.} 51 (1984) 293--304.

\end{thebibliography}
\end{document}